    \newtheorem{thm}{Theorem}[section]
    \newtheorem{prop}[thm]{Proposition}
    \newtheorem{defnprop}[thm]{Definition-Proposition}  
   \newtheorem{lem}[thm]{Lemma}
    \newtheorem{coro}[thm]{Corollary}
\theoremstyle{defn}
    \newtheorem{defn}[thm]{Definition}
    \newtheorem{ex}[thm]{Example}
    \newtheorem{exs}[thm]{Examples}
\theoremstyle{remark}
    \newtheorem{rmk}[thm]{Remark}
\newcommand{\dd}{\diamond}
\newcommand{\Courant}[1]{\left\llbracket  #1\right\rrbracket }
\def\ns{\curlyvee}
\begin{document}
\title{\sf Rota-Baxter type  operators on trusses and derived structures}
\author{\bf  T. Chtioui$^{1}$, M. Elhamdadi$^{2}$,  S. Mabrouk$^{3}$ and A. Makhlouf$^{4}$} 
\date{ 
$^{1}$ Mathematics and Applications Laboratory LR17ES11, Faculty of Sciences, Gabes University, Tunisia.\\
$^{2}$
Department of Mathematics,
University of South Florida, Tampa, FL 33620, U.S.A.
\\
$^{3}$ Faculty of Sciences, University of Gafsa, BP 2100, Gafsa, Tunisia.\\
$^{4}$ Universit\'e de Haute Alsace~ IRIMAS - D\'epartement de Math\'ematiques,  Mulhouse, France}
\maketitle


\begin{abstract}
The aim of this paper is to introduce and study the concepts of the Rota-Baxter operator and Reynolds operator within the framework of trusses. Moreover, we introduce and discuss dendriform trusses, tridendriform trusses, and NS-trusses as fundamental algebraic structures underlying these classes of operators. Furthermore, we consider the notions of Nijenhuis operator and averaging operator to trusses, exploring their properties and applications to uncover new algebraic structures.
\end{abstract}

\maketitle



\medskip

\medskip

\medskip

{\bf Keywords:} heap, truss, ring, tridendriform truss, NS-truss, Rota-Baxter operator, Reynolds operator, Nijenhuis operator, averaging operator. 

{\bf Mathematics Subject Classification (2020):} 20N10; 81R12; 	17B38.

\thispagestyle{empty}
\numberwithin{equation}{section}

\tableofcontents


\medskip

\section{Introduction}\label{sec1}
\textbf{Heaps and trusses}.
A truss \cite{ Brz:par} is an algebraic structure comprising of a set equipped with a ternary operation that forms an abelian heap \cite{Pru:the, Bae:ein}, along with an associative binary operation that distributes over the ternary operation. From the perspective of universal algebra, its composition involves fewer operations, making it simpler than a ring (which requires two binary operations, one unary operation, and one nullary operation) or a (two-sided) brace \cite{Rum:bra, CedJes:bra}. A brace, defined as a set with two intertwined group structures, involves six operations and has recently gained significant attention due to its connection with the set-theoretic Yang-Baxter equation.

A truss can be transformed into a ring by introducing a specific nullary operation or an element with special properties. If the binary operation in a truss is a group operation, it can be associated with a (two-sided) brace. Conversely, every ring can naturally be viewed as a truss by defining the ternary heap operation as 
$[a,b,c]=a-b+c$, derived from the abelian group operation. Similarly, every brace can also be interpreted as a truss. This demonstrates that trusses are not only structurally simpler but also more general than rings. However, their definition relies on a ternary operation, which is less intuitive and familiar compared to binary operations, presenting a classic trade-off between generality and ease of understanding.

\textbf{Rota-Baxter type operators}.
The study of operators on various algebraic structures is a well-established and classical area of research in algebra. Among the most extensively studied operators are endomorphisms and automorphisms, which can be defined on the generating set of an algebraic system and then extended to all its elements. However, there are other types of operators whose construction is more complex. Rota-Baxter operators appeared first in Baxter's seminal work \cite{Bax} in 1960 in fluctuation theory. Since then, the theory of Rota-Baxter operators has been developed extensively across various fields of mathematics. 
Given an algebra $A$, a linear map $R : A \rightarrow A$ is a Rota-Baxter operator of weight $\lambda$, if for all $x,y\in A$, 
 \begin{equation}
 R(x)R(y)=R(xR(y))+R(R(x)y)+\lambda R(xy).
 \end{equation}
These operators are significant due to their connections with key mathematical concepts, including the Yang-Baxter equation \cite{MR0674005, MR0725413,MR2568415}, Dendriform algebras \cite{MR3021790} and renormalization of quantum field theory \cite{CK}. A comprehensive historical overview and their connections to other mathematical ideas can be found in  \cite{G}.
Rota-Baxter operators on groups were introduced in \cite{GLS}. 
Let $G$ be a group. A map $R\colon G\to G$ is called a  Rota-Baxter operator of weight~1  (resp. weight~$-1$) if, for all $g,h\in G$,
\begin{equation}
R(g)R(h) = R( g R(g) h R(g)^{-1} ), \quad (\text{resp. } R(g) R(h) = R( R(g) h R(g)^{-1} g ).
\end{equation}
The properties of Rota-Baxter groups have been actively studied in \cite{BG}.  Connections  between Rota-Baxter groups,  Yang-Baxter equation and skew braces were established in \cite{BG-1}. 

Other notable classes of operators include  Reynolds operators and averaging operators, which have emerged independently in different branches of mathematics.
Let $E$ be a locally compact Hausdorff space and  $C_0(E)$ denote the Banach algebra of all continuous functions on $E$ vanishing at $\infty$. A Reynolds operator $P:  C_0(E) \to C_0(E)$ (see \cite{R}) is a linear map  that satisfies the Reynolds identity:
\[
P(x) P(y) = P \left( P(x) y + x P(y) - P(x) P(y)\right), \qquad\forall x, y \in C_0(E).
\]
Obviously, a projection $P$ is a Reynolds operator if $P$ is averaging, i.e.
\[
P(x) P(y) = P \left( P(x) y\right), \qquad\forall x, y \in C_0(E).
\]
The Reynolds operator was originally introduced in the context of turbulence theory. Averaging operators have been studied extensively by Birkhoff \cite{Bir1, Bir2}, Kelley \cite{K} and  Rota \cite{R1, R2}, see also  \cite{Safa, HP, FM, KKL, PB,  BCEM,S}, as well as references therein.

Inspired by the extensive body of work on operators in algebraic systems, particularly the study of Rota-Baxter operators, Nijenhuis operators, Reynolds operators and  averaging operators, this paper extends these concepts to the setting of trusses which provide a natural yet under-explored framework for such operators.

In this work, we introduce Rota-Baxter operators, Reynolds operators, Nijenhuis operators and averaging operators on trusses, systematically constructing all derived structures and investigating their properties. We explore the interplay between these operators and the underlying truss structure, highlighting how the ternary operation influences their behavior and properties. Additionally, we establish connections between these operators and other algebraic systems, such as rings, tridendriform trusses, ditrusses and NS-trusses, demonstrating how trusses serve as a unifying framework that generalizes and simplifies these structures.

By developing this theory, we aim to provide a deeper understanding of the algebraic and structural properties of trusses, while also uncovering new connections to well-studied operators in algebra. This work not only enriches the theory of trusses but also opens up new directions for research in universal algebra, operator theory, and their applications.

\textbf{Layout of the paper}.
In Section \ref{Sec2}, we provide a brief review of fundamental concepts related to heaps and trusses, establishing the necessary groundwork for the subsequent discussions. Section \ref{Sec3} focuses on the introduction of Rota-Baxter operators on trusses, where we present key characterizations and properties of these operators. In addition, we explore various algebraic structures that naturally arise from their application. In Section \ref{Sec4}, we introduce the notion of Reynolds operator on trusses, investigate their role in generating associated algebraic systems, and highlight their significance in structural analysis. Finally, in the last section, we consider  averaging operators and the interconnected algebraic structures they induce, offering further insights into their theoretical and practical implications.

\section{Preliminaries on heaps and trusses}\label{Sec2}

In this section, we review the basics  of heaps and trusses, along with some necessary notations. Further details can be found in, e.g., \cite{Brz:par, Bae:ein, Pru:the}. We give a classification of truss on $\mathbb Z_2$ and some construction results on trusses which will be useful in the sequel.

\begin{defn}[Abelian Heap]
An \emph{abelian heap} is a set $H$ equipped with a ternary operation $[\cdot,\cdot,\cdot]$ satisfying, for all $a,b,c,x,y \in H$, the following identities:
\begin{equation}\label{assoc}
  [[a,b,c],x,y]=[a,b,[c,x,y]],
\end{equation}
\begin{equation}\label{Malcev}
[a,a,b] = [b,a,a] = b,
\end{equation}
\begin{equation}\label{comm}
[a,b,c] = [c,b,a].
\end{equation}

\end{defn}
Equation \eqref{assoc} represents the \emph{associativity law} for heaps, while \eqref{Malcev} are known as the \emph{Mal'cev identities}. Equation \eqref{comm} expresses the \emph{commutativity law} for heaps.




\begin{defnprop}[Retract of a Heap]
For any $e \in H$, the set $H$ equipped with the binary operation
\[
+_e = [-,e,-]
\]
forms an abelian group, known as the \emph{retract} of $H$. The chosen element $e$ serves as the identity element of this group, and the inverse $-_e a$ of an element $a \in H$ is given by:
\[
-_e a = [e,a,e].
\]
\end{defnprop}
We denote this unique (up to isomorphism) group by $\mathcal{G}(H;e)$. Conversely, for any abelian group $G$, the operation
\[
[a,b,c] = a - b + c
\]
defines a heap structure on $G$, which we denote by $\mathcal{H}(G)$. 

A \emph{homomorphism of heaps} is a map that preserves the heap operation. In particular, any group homomorphism is also a heap homomorphism for the corresponding heap. Thus, the assignment $\mathcal{H}: G \to \mathcal{H}(G)$ defines a functor from the category of abelian groups to the category of abelian heaps.

\begin{exs}$ $
    \begin{enumerate}
    \item 
    The empty set, denoted by \(\emptyset\), forms a trivial heap when equipped with the ternary operation:
    \[
    \emptyset \times \emptyset \times \emptyset \rightarrow \emptyset.
    \]
    This operation is vacuously defined since there are no elements to operate on, making \(\emptyset\) the simplest example of a heap.
    \item 
    Any singleton set, say \(\{\ast\}\), can be endowed with a heap structure by defining the ternary operation as:
    \[
    [\ast, \ast, \ast] = \ast.
    \]
    This heap serves as the terminal object in the category of heaps, denoted by \(\mathbf{Hp}\), meaning there is a unique morphism from any heap to \(\{\ast\}\).
    \item 
    Consider the set \(H = 2\mathbb{Z} + 1\) consisting of all odd integers. This set forms an abelian heap under the ternary operation:
    \[
    [2n + 1, 2m + 1, 2p + 1] = 2(n - m + p) + 1,
    \]
    for all integers \(n, m, p \in \mathbb{Z}\). The abelian property ensures that the operation is commutative in a suitable sense.
    Furthermore, if we fix the element \(1 \in H\), the set \(H\) can be retracted to a group structure. The group operation is defined as:
    \[
    (2m + 1) \cdot (2n + 1) = 2(m + n) + 1,
    \]
    for all \(m, n \in \mathbb{Z}\), with \(1\) acting as the neutral element. This group is isomorphic to the additive group of integers \(\mathbb{Z}\) via the map:
    \[
    2m + 1 \mapsto m.
    \]
    This isomorphism highlights the deep connection between the heap structure and the underlying group structure.
\end{enumerate}

\end{exs}

\begin{defn}[Truss]
A \emph{truss} is  an abelian heap  $(T,[\cdot,\cdot,\cdot])$  equipped with  a binary operation,  denoted by $\cdot$ or just by  juxtaposition and called \emph{multiplication}, such that 
 $(T,\cdot)$ is a semigroup and
 for all $a,b,c,d\in T$, we have 
    \begin{equation}\label{dist}
        a[b,c,d] = [ab, ac, ad] \quad \text{and} \quad [a,b,c]d = [ad, bd, cd].
    \end{equation}
\end{defn}
A \emph{morphism of trusses} is a map that is a homomorphism of both heaps and semigroups. A subtruss $S$ of a truss $(T,[\cdot,\cdot,\cdot],\cdot)$ is subheap such that $S\cdot S\subset S.$

The following examples are given in \cite{Brz:par}.
\begin{exs}
         Let $(T,[\cdot,\cdot,\cdot],\cdot)$ be a truss and let $X$ be a set. The set $T^X$ of all functions $X\to T$ is a truss with the pointwise defined operations, i.e.\ for all $f,g,h\in T^X$,

\begin{equation}
[f,g,h] : X\to T, \qquad x\mapsto [f(x),g(x),h(x)],
\end{equation}
\begin{equation}
fg : X\to T, \qquad x\mapsto f(x)g(x).
\end{equation}
\end{exs}

\begin{ex}
Consider the Klein four group (written additively)
$V_4 = \{0,\, a,\, b,\, a+b\}$.
Then \(V_4\) becomes a truss when equipped with the heap operation \([\,\cdot,\cdot,\cdot\,]_+\) (induced by the group addition) and the multiplication defined by the table
\[
\begin{array}{c|cccc}
\cdot & 0   & a   & b   & a+b \\ \hline
0     & 0   & 0   & b   & a \\
a     & a   & a   & a+b & 0 \\
b     & b   & b   & 0   & a+b \\
a+b   & a+b & a+b & a+b & a\,b \\
\end{array}
\]
    
\end{ex}

\begin{ex}
 Let \(R\) be a ring and  \(e \in M_n(R)\) be an \(n \times n\) idempotent matrix (that is, \(e^2 = e\)). Consider the module \(R^n\), which forms an abelian group under addition. This additive structure induces the natural heap operation defined by
\[
[x,y,z] = x - y + z,\quad \text{for all } x,y,z \in R^n.
\]
Now, define a binary multiplication on \(R^n\) by
\[
(r_1,\ldots,r_n) \cdot (s_1,\ldots,s_n) = (r_1,\ldots,r_n) + (s_1,\ldots,s_n)e.
\]
One may verify that the multiplication is distributive over the heap operation, and hence 
$ \left(R^n, [\cdot,\cdot,\cdot], \, \cdot \, \right)$
forms a truss.   
\end{ex}

\begin{lem}\cite{Brz:par}
    The set of integers $\mathbb Z$ with the usual addition can be viewed as a heap with induced operation,
\begin{equation}\label{heap.+}
[l,m,n]_+ = l-m+n.
\end{equation}
There are two non-commutative truss structures on $(\mathbb Z,[\cdot,\cdot,\cdot]_+)$ with products defined for all $m,n\in \mathbb Z$,
\begin{equation}\label{mnm}
m\cdot n =m \quad \mbox{or} \quad m\cdot n = n. 
\end{equation}
\end{lem}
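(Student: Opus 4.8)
The plan is to verify the two truss axioms for each of the proposed products: first that $(\mathbb{Z},\cdot)$ is a semigroup (i.e. the product is associative), and second that this product distributes over the heap operation $[\cdot,\cdot,\cdot]_+$ on both sides. Since there are two candidate products, $m\cdot n = m$ (the left-projection) and $m\cdot n = n$ (the right-projection), and the argument is symmetric, I would write out the details for $m\cdot n = m$ and remark that the case $m\cdot n = n$ follows by the obvious left-right dual reasoning.

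For associativity with $m\cdot n = m$: compute $(l\cdot m)\cdot n = l\cdot n = l$ and $l\cdot(m\cdot n) = l\cdot m = l$, so the two agree; hence $(\mathbb{Z},\cdot)$ is a semigroup. For distributivity, I would check the two identities in \eqref{dist}. On the left: $a\cdot[b,c,d]_+ = a$ since left-projection ignores its second argument, while $[a\cdot b, a\cdot c, a\cdot d]_+ = [a,a,a]_+ = a - a + a = a$ by the definition \eqref{heap.+}, so both sides equal $a$. On the right: $[a,b,c]_+\cdot d = [a,b,c]_+ = a-b+c$, while $[a\cdot d, b\cdot d, c\cdot d]_+ = [a,b,c]_+ = a-b+c$; again both sides agree. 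Thus $m\cdot n = m$ defines a truss structure, and it is clearly non-commutative since e.g. $0\cdot 1 = 0 \neq 1 = 1\cdot 0$.

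For the case $m\cdot n = n$, the computations are dual: associativity gives $(l\cdot m)\cdot n = n = l\cdot(m\cdot n)$; left distributivity gives $a\cdot[b,c,d]_+ = [b,c,d]_+ = b-c+d = [a\cdot b, a\cdot c, a\cdot d]_+$; and right distributivity gives $[a,b,c]_+\cdot d = d = d-d+d = [a\cdot d, b\cdot d, c\cdot d]_+$. Non-commutativity is witnessed by the same pair.

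I do not expect any genuine obstacle here: every step is a one-line substitution using only the explicit formula \eqref{heap.+} for the heap operation and the Mal'cev-type cancellation $a-a+a=a$. If anything, the only point deserving a sentence of comment is \emph{why these are the only two non-commutative truss structures} on $(\mathbb{Z},[\cdot,\cdot,\cdot]_+)$ — but the statement as phrased only asserts their existence and non-commutativity, so I would confine the proof to the verification above and leave the classification claim (if needed) to a separate argument or a reference to \cite{Brz:par}.
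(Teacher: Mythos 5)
The paper does not prove this lemma at all --- it is quoted from \cite{Brz:par} --- so the comparison here is really against the source. Your verification of the stated products is correct and is exactly the routine computation one would write: associativity is immediate for both projections, both distributivity identities in \eqref{dist} reduce to $a-a+a=a$ or to the tautology $[a,b,c]_+=[a,b,c]_+$, and $0\cdot 1\neq 1\cdot 0$ witnesses non-commutativity. None of these steps fails.

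The genuine issue is the scope of the statement. Read alongside the companion Lemma~\ref{truss on Z}, which classifies the \emph{commutative} truss products on $(\mathbb{Z},[\cdot,\cdot,\cdot]_+)$ up to isomorphism, this lemma is the other half of a classification: it asserts that the two projections are the \emph{only} non-commutative truss structures on this heap (this is how the result is stated in \cite{Brz:par}). You explicitly set that half aside, but it is the entire substantive content of the lemma; the existence half is trivial. The uniqueness argument is short and worth including: since left and right multiplications must be heap endomorphisms of $\mathcal{H}(\mathbb{Z},+)$, hence affine maps, any truss product on this heap has the form $m\cdot n=amn+bm+cn+d$ with $a,b,c,d\in\mathbb{Z}$; non-commutativity means $b\neq c$, and comparing coefficients in $(l\cdot m)\cdot n=l\cdot(m\cdot n)$ gives $a(b-c)=0$ and $d(b-c)=0$, hence $a=d=0$, and then $b^2=b$, $c^2=c$ with $b\neq c$ forces $(b,c)\in\{(1,0),(0,1)\}$, i.e.\ exactly the two products in \eqref{mnm}. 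Without this (or an explicit disclaimer that only existence is being proved, deferring uniqueness to \cite{Brz:par}), the proof does not establish the lemma as stated.
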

\begin{lem}\cite{Brz:par}\label{truss on Z}
 Up to isomorphism, the following commutative products equip \((\mathbb{Z}, [\cdot,\cdot,\cdot]_+)\) with different truss structures, for all \( m, n \in \mathbb{Z} \):
\begin{enumerate}
    \item For all \( a \in \mathbb{N} \),
    \begin{align}
        m \cdot n &= a m n, \quad a \neq 1, \label{eq:3.40a} \\
        m \cdot n &= a m n + m + n. \label{eq:3.40b}
    \end{align} 
    \item For all \( c \in \mathbb{N} \setminus \{0\} \),
    \begin{align}
        m \cdot n &= c, \label{eq:3.41a} \\
        m \cdot n &= m + n + c. \label{eq:3.41b}
    \end{align}
    \item For all \( a \in \mathbb{N} \setminus \{0,1,2\} \), \( b \in \{2,3,\dots,a-1\} \), and \( c \in \mathbb{N} \setminus \{0\} \) such that \( ac = b(b-1) \),
    \begin{equation}
        m \cdot n = a m n + b(m+n) + c. \label{eq:3.42}
    \end{equation}
\end{enumerate}  
\end{lem}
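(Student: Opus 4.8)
The plan is to classify all commutative truss multiplications on the abelian heap $(\mathbb{Z},[\cdot,\cdot,\cdot]_+)$ in three moves: extract a rigid normal form from the distributive laws, cut it down using associativity, and reduce the surviving families modulo the heap automorphisms of $\mathcal{H}(\mathbb{Z})$. For the first move, let $\cdot$ be any truss multiplication on $(\mathbb{Z},[\cdot,\cdot,\cdot]_+)$. Writing out the distributive laws \eqref{dist}, namely $(\ell-m+n)\cdot k=\ell\cdot k-m\cdot k+n\cdot k$ and $k\cdot(\ell-m+n)=k\cdot \ell-k\cdot m+k\cdot n$, and setting $m=0$, one sees that for each fixed $k$ the maps $\ell\mapsto \ell\cdot k-0\cdot k$ and $\ell\mapsto k\cdot \ell-k\cdot 0$ are group endomorphisms of $(\mathbb{Z},+)$, hence are multiplications by integers. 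A short computation on the resulting slopes then yields integers $r,p,q,s$ with $m\cdot n=rmn+pm+qn+s$ for all $m,n$, and conversely any such formula distributes over $[\cdot,\cdot,\cdot]_+$. Commutativity of $\cdot$ is equivalent to $p=q$, so a commutative candidate has the form $m\cdot n=rmn+p(m+n)+s$.

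For the second move, substitute this form into $(m\cdot n)\cdot k=m\cdot(n\cdot k)$ and compare the coefficients of the monomials $mnk$, $mn$, $mk$, $nk$, $m$, $n$, $k$ and $1$: every resulting equation holds automatically except the single Diophantine condition $p(p-1)=rs$. Thus the commutative truss structures on $(\mathbb{Z},[\cdot,\cdot,\cdot]_+)$ are exactly those of the form $m\cdot n=rmn+p(m+n)+s$ with $(r,p,s)\in\mathbb{Z}^3$ and $p(p-1)=rs$. (Incidentally, redoing the coefficient comparison without the commutativity hypothesis recovers $m\cdot n=m$ and $m\cdot n=n$ as the only additional, necessarily non-commutative, solutions, in agreement with the preceding Lemma.)

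For the third move, note that every bijective heap endomorphism of $\mathcal{H}(\mathbb{Z})$ is of the form $\varphi(x)=\varepsilon x+t$ with $\varepsilon\in\{\pm1\}$ and $t\in\mathbb{Z}$ (pass to $\psi=\varphi-\varphi(0)$, which is additive); a direct calculation shows $\varphi$ carries the truss with parameters $(r,p,s)$ to the one with parameters $\bigl(\varepsilon r,\ p-\varepsilon r t,\ \varepsilon s+t+\varepsilon r t^2-2pt\bigr)$, and that this transformation preserves the relation $p(p-1)=rs$. Using $\varepsilon=-1$ we may assume $r\ge 0$. If $r=0$, then $p(p-1)=0$ forces $p\in\{0,1\}$: for $p=0$ we may shift $s$ to any value, landing up to isomorphism on a constant product $m\cdot n=c$ of family (2) (or the zero multiplication $m\cdot n=0$); for $p=1$ we land on $m\cdot n=m+n+c$, again family (2). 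If $r=a>0$, use $t$ to normalize $p$ to a residue $b$ with $0\le b\le a-1$; the constraint then forces $s=b(b-1)/a$, which is a nonnegative integer because $a\mid b(b-1)$ is inherited from $p(p-1)=rs$. The sub-cases $b=0$, $b=1$, and $2\le b\le a-1$ give, in order, $m\cdot n=amn$, $m\cdot n=amn+m+n$, and $m\cdot n=amn+b(m+n)+c$ with $ac=b(b-1)$; the third sub-case is non-empty only when $a\ge 3$ (and then $c\ge 1$), while for $a=1$ the residue $0$ can be translated to $1$, which is exactly why the value $a=1$ is excluded from the first family. Collecting the sub-cases produces precisely the families (1)--(3).

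I expect the conceptual crux to be the first move — forcing the affine–bilinear normal form out of the two distributive laws; once that is established the problem becomes finite. The step most prone to error will be the bookkeeping in the third move: tracking the action of $(\varepsilon,t)$ on $(r,p,s)$, checking which residues $b$ and which signs of $r$ are actually realized, and verifying that the degenerate values $a=0,1,2$ merge into the adjacent families in exactly the way the statement records, so that the displayed list comes out both exhaustive and with the stated parameter ranges.
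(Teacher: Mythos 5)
The paper itself does not prove this lemma; it is imported from \cite{Brz:par} with no argument given, so there is nothing in the text to compare against and I am judging your proposal on its own. Your three-step strategy --- the affine--bilinear normal form $m\cdot n=rmn+pm+qn+s$ forced by the two distributive laws, the single surviving constraint $p(p-1)=rs$ from associativity in the commutative case, and reduction modulo the heap automorphisms $x\mapsto\varepsilon x+t$ of $\mathcal{H}(\mathbb{Z})$ --- is the right one, and I have checked the computations in the first two moves (including the parenthetical recovery of $m\cdot n=m$ and $m\cdot n=n$ as the only non-commutative solutions) as well as your transformation law for $(r,p,s)$ and the invariance of $p(p-1)=rs$ under it.

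The genuine gap is in the $r=0$ branch of the third move, and it is not mere bookkeeping. Your own formula gives $s'=\varepsilon s+t$ when $(r,p)=(0,0)$ and $s'=\varepsilon s-t$ when $(r,p)=(0,1)$, so translations act transitively on the constant term: every product $m\cdot n=c$ is isomorphic to $m\cdot n=0$ (explicitly $x\mapsto x-c$ works), and every $m\cdot n=m+n+c$ is isomorphic to $m\cdot n=m+n$. You observe that ``we may shift $s$ to any value'' and then assert that one ``lands on a constant product $m\cdot n=c$ of family (2)'', but that sentence does not prove the statement as written: what your computation actually shows is that family (2) contributes exactly two isomorphism classes, both of which are already the $a=0$ entries of family (1). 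So either the lemma is to be read as ``every commutative truss structure on $\mathcal{H}(\mathbb{Z})$ is isomorphic to one on the (then redundant) list'', in which case you should say so explicitly and your proof is essentially complete, or the listed products are claimed to be pairwise non-isomorphic, in which case your argument refutes item (2) rather than proving it, and no invariant separating the different constants exists. You must confront this head-on instead of folding it into the phrase ``up to isomorphism''. A smaller omission of the same kind: to get that the entries with $a\ge 1$ really are pairwise distinct you need to record that $|r|$ and, for $r\neq 0$, the residue $p\bmod r$ are invariants of the $(\varepsilon,t)$-action; this follows immediately from your transformation formula but is nowhere stated.
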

\begin{ex}\label{ClassifZ2}
In the following, we provide a classification of trusses on $\mathbb{Z}_2$ with the standard abelian heap ternary bracket  $[\cdot,\cdot,\cdot]_+.$

    Any truss product  on $\mathbb{Z}_2$,  with the standard abelian heap ternary bracket  $[\cdot,\cdot,\cdot]_+$, is isomorphic to  a truss corresponding to one of the binary operations given by the following Cayley tables providing the   multiplication of  the ith row elements by the jth column elements.    
    \begin{small}
    \begin{align*}
 (1):=&\begin{tabular}{|l|l|l|}
     \hline
     $\cdot$  & $0$ & $1$ \\
       \hline 
       $0$ & $0$   &$0$\\
      \hline 
      $1$ & $0$& $0$\\
      \hline
    \end{tabular},\;(2):=\begin{tabular}{|l|l|l|}
     \hline
     $\cdot$  & $0$ & $1$ \\
       \hline 
       $0$ & $0$   &$1$\\
      \hline 
      $1$ & $1$& $0$\\
      \hline
    \end{tabular},\; (3):=\begin{tabular}{|l|l|l|}
     \hline
     $\cdot$  & $0$ & $1$ \\
       \hline 
       $0$ & $0$   &$0$\\
      \hline 
      $1$ & $0$& $1$\\
      \hline
    \end{tabular},\; (4):=\begin{tabular}{|l|l|l|}
     \hline
     $\cdot$  & $0$ & $1$ \\
       \hline 
       $0$ & $0$   &$0$\\
      \hline 
      $1$ & $1$& $1$\\
      \hline
    \end{tabular}, \; 
(5):=\begin{tabular}{|l|l|l|}
     \hline
     $\cdot$  & $0$ & $1$ \\
       \hline 
       $0$ & $0$   &$1$\\
      \hline 
      $1$ & $0$& $1$\\
      \hline
    \end{tabular}.
    \end{align*}
    \end{small}
    \end{ex}

 Notice that the structures given by the tables:
 \begin{align*}(6):=\begin{tabular}{|l|l|l|}
     \hline
     $\cdot$  & $0$ & $1$ \\
       \hline 
       $0$ & $1$   &$0$\\
     \hline 
      $1$ & $0$& $1$\\
      \hline
   \end{tabular},\quad(7):=\begin{tabular}{|l|l|l|}
     \hline
     $\cdot$  & $0$ & $1$ \\
       \hline 
       $0$ & $0$   &$1$\\
      \hline 
      $1$ & $1$& $1$\\
      \hline
    \end{tabular}, \quad(8):=\begin{tabular}{|l|l|l|}
     \hline
     $\cdot$  & $0$ & $1$ \\
       \hline 
       $0$ & $1$   &$1$\\
      \hline 
      $1$ & $1$& $1$\\
      \hline
    \end{tabular}
\end{align*} are isomorphic to (2), (3) and (1), respectively.
\begin{defn}
    A left (resp. right) absorber element of a truss $T$ is an element ${\bf 0}\in T$ such 
that, for all $t \in T$, $t\cdot {\bf 0} = \bf 0$ (resp. ${\bf 0}\cdot t = \bf 0$). We say that $\bf 0$ is an absorber element  if it is a left and right absorber element.
\end{defn}

\begin{rmk}\label{TrussesRings}
Unless otherwise stated, by a \emph{ring} we mean an associative ring that is not necessarily unital. Given a ring $(R, +, \cdot)$, we can associate a truss $\mathcal{T}(R)$ where the heap structure is given by $\mathcal{H}(R, +)$, while the multiplication remains unchanged. The assignment $\mathcal{T}: R \to \mathcal{T}(R)$, acting as the identity on morphisms, defines a functor from the category of rings to the category of trusses.
Conversely, if a truss $T$ possesses an absorber element ${\bf 0} \in T$.
Then the structure $\mathcal{R}(T; {\bf 0}) := (\mathcal{G}(T; {\bf 0}), \cdot)$ forms a ring. Here, $\mathcal{G}(T; \bf 0)$ denotes the group-like structure derived from the truss $T$ with respect to the absorber $\bf 0$, and the multiplication operation is preserved. Moreover, this construction satisfies the identity:
\[
T = \mathcal{T}(\mathcal{R}(T; \bf 0)).
\]
This identity shows that the original truss $T$ can be recovered by applying the functor $\mathcal{T}$ to the ring $\mathcal{R}(T; \bf 0)$, establishing a natural correspondence between trusses with absorbers and rings.
\end{rmk}

\begin{lem}\label{LEMMA}
     In an Abelian heap $H$ we have, for all $x_i,y_i,z_i\in H$,
     \begin{equation}
         [[x_1,x_2,x_3],[y_1,y_2,y_3],[z_1,z_2,z_3]]=[[x_1,y_1
         ,z_1],[x_2,y_2,z_2],[x_3,y_3,z_3]].
     \end{equation}
\end{lem}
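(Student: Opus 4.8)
The plan is to reduce this identity about the iterated ternary bracket to a computation in an abelian \emph{group}, where it becomes a trivial rearrangement of a sum. The key tool is the retract construction: fix any element $e \in H$ (for instance $e = x_2$, or any convenient choice) and pass to the abelian group $\mathcal{G}(H;e)$ with operation $a +_e b = [a,e,b]$ and inverse $-_e a = [e,a,e]$. In this group one has the standard formula expressing the heap bracket in terms of the group operations, namely $[a,b,c] = a -_e b +_e c$; this follows directly from the heap axioms \eqref{assoc}, \eqref{Malcev}, \eqref{comm} together with the definition of $+_e$ and $-_e$, and is exactly the statement that $\mathcal{H}(\mathcal{G}(H;e)) = H$ as heaps.

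First I would record (or cite from the retract Definition-Proposition) the identity $[a,b,c] = a -_e b +_e c$ valid for all $a,b,c \in H$. Then I would rewrite both sides of the claimed equation using this formula. The left-hand side becomes
\[
[[x_1,x_2,x_3],[y_1,y_2,y_3],[z_1,z_2,z_3]] = (x_1 -_e x_2 +_e x_3) -_e (y_1 -_e y_2 +_e y_3) +_e (z_1 -_e z_2 +_e z_3),
\]
while the right-hand side becomes
\[
[[x_1,y_1,z_1],[x_2,y_2,z_2],[x_3,y_3,z_3]] = (x_1 -_e y_1 +_e z_1) -_e (x_2 -_e y_2 +_e z_2) +_e (x_3 -_e y_3 +_e z_3).
\]
Since $\mathcal{G}(H;e)$ is an abelian group, both expressions equal $x_1 -_e x_2 +_e x_3 -_e y_1 +_e y_2 -_e y_3 +_e z_1 -_e z_2 +_e z_3$ after expanding and using commutativity and associativity (being careful that $-_e$ distributes over $+_e$ in an abelian group, i.e.\ $-_e(a +_e b) = (-_e a) +_e (-_e b)$ and $-_e(-_e a) = a$). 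Hence the two sides agree, and since $H = \mathcal{H}(\mathcal{G}(H;e))$ on the nose, the original bracket identity holds in $H$.

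The only real obstacle is a bookkeeping one: one must be sure that the passage from the heap $H$ to the group $\mathcal{G}(H;e)$ and back is literally an equality of heaps (not merely an isomorphism), so that an identity proved in $\mathcal{H}(\mathcal{G}(H;e))$ transfers verbatim to $H$. This is guaranteed by the retract Definition-Proposition and the remark that $\mathcal{H}$ and $\mathcal{G}(-;e)$ are mutually inverse on underlying sets with $[a,b,c] = a -_e b +_e c$. An alternative, entirely self-contained route avoids group language: expand the left-hand side directly using associativity \eqref{assoc} to move brackets around, insert trivial terms of the form $[x_2,x_2,\,\cdot\,]$ via \eqref{Malcev} to create the needed groupings, and use commutativity \eqref{comm}; this is the ``hands-on'' proof but is more tedious, so I would present the retract argument as the main proof and perhaps remark that a direct heap-theoretic computation is also possible.
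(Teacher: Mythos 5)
Your argument is correct. Note, however, that the paper offers no proof of this lemma at all: it is stated as a bare fact (it is a standard identity for abelian heaps, implicit in the rearrangement rules for iterated brackets in Brzezi\'nski's work), so there is no ``paper proof'' to match against. Your retract argument is exactly the standard way to verify such identities: fix $e\in H$, use that $[a,b,c]=a-_e b+_e c$ in the abelian group $\mathcal{G}(H;e)$ (which follows from the Mal'cev identities and associativity, and is literally an equality of heaps $H=\mathcal{H}(\mathcal{G}(H;e))$, not merely an isomorphism), and then observe that both sides expand to the same signed sum $x_1-_e x_2+_e x_3-_e y_1+_e y_2-_e y_3+_e z_1-_e z_2+_e z_3$ by commutativity. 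The one point worth stating explicitly, which you do flag, is the distribution of $-_e$ over $+_e$; that is where commutativity of the group is genuinely used, and it is why the lemma requires the heap to be abelian. Your remark that a direct heap-theoretic computation via associativity, cancellation of repeated adjacent entries, and the odd/even permutation symmetry is also possible is accurate; either route is acceptable, and yours fills a gap the paper leaves open.
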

\begin{prop}\label{TrussTimes}
    Let $(T,[\cdot,\cdot,\cdot],\cdot)$  be a truss. Then the  set $T\times T$ has a  truss structure given by
    \begin{align}
        &\Courant{(a,x),(b,y),(c,y)}=([a,b,c],[x,y,z]),\\
       & (a,x)\odot (b,y)=(a\cdot b,[a\cdot y,{\bf 0},x\cdot b]),
    \end{align}
     for any $a,b,c,x,y,z\in T$. This truss is denoted by $T\ltimes T$.
    \end{prop}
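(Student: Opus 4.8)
The plan is to verify the truss axioms for $T\ltimes T$ one family at a time, isolating the single computation that is not purely formal. (Note that the definition of $\odot$ presupposes that $T$ carries an absorber element $\mathbf 0$; that hypothesis is precisely what gets used in the associativity step below.) The abelian-heap axioms for $T\times T$ require no work: the ternary operation $\Courant{\cdot,\cdot,\cdot}$ is defined coordinatewise, so heap associativity \eqref{assoc}, the Mal'cev identities \eqref{Malcev}, and commutativity \eqref{comm} on $T\times T$ are inherited directly from the two factors. Hence everything reduces to two points: (i) $\odot$ is associative, so that $(T\times T,\odot)$ is a semigroup; and (ii) $\odot$ satisfies the two distributive laws \eqref{dist} with respect to $\Courant{\cdot,\cdot,\cdot}$.

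I would handle the distributive laws first, since they are the easier half. For the left law, expanding $(a,x)\odot\Courant{(b,y),(c,z),(d,w)}$ coordinatewise and pushing multiplications through brackets by \eqref{dist} in $T$ gives first coordinate $[ab,ac,ad]$ and second coordinate $[\,[ay,az,aw],\,\mathbf 0,\,[xb,xc,xd]\,]$, whereas $\Courant{(a,x)\odot(b,y),\,(a,x)\odot(c,z),\,(a,x)\odot(d,w)}$ has first coordinate $[ab,ac,ad]$ and second coordinate $[\,[ay,\mathbf 0,xb],\,[az,\mathbf 0,xc],\,[aw,\mathbf 0,xd]\,]$. The two second coordinates are equal by the interchange identity of Lemma \ref{LEMMA} combined with the Mal'cev consequence $[\mathbf 0,\mathbf 0,\mathbf 0]=\mathbf 0$. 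The right distributive law is verified the same way with the two copies of $T$ interchanged. This part uses no special property of $\mathbf 0$.

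The main obstacle is associativity of $\odot$, and it is the only point at which the absorber hypothesis is used. Computing $\big((a,x)\odot(b,y)\big)\odot(c,z)$, then distributing the right multiplication by $c$ through the inner bracket via \eqref{dist} and rebracketing with \eqref{assoc}, produces first coordinate $(ab)c$ and second coordinate $[\,[abz,\,\mathbf 0,\,ayc],\,\mathbf 0 c,\,xbc\,]$. Computing $(a,x)\odot\big((b,y)\odot(c,z)\big)$ and distributing the left multiplication by $a$ via \eqref{dist} produces first coordinate $a(bc)$ and second coordinate $[\,[abz,\,a\mathbf 0,\,ayc],\,\mathbf 0,\,xbc\,]$. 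The first coordinates agree because $(T,\cdot)$ is a semigroup; the second coordinates agree once one invokes the two-sided absorber identities $\mathbf 0 c=\mathbf 0=a\mathbf 0$, both sides then collapsing to $[\,[abz,\,\mathbf 0,\,ayc],\,\mathbf 0,\,xbc\,]$. Together with the distributive laws of the previous paragraph this shows $(T\times T,\Courant{\cdot,\cdot,\cdot},\odot)$ is a truss, namely $T\ltimes T$. The only real care needed is bookkeeping: tracking which rebracketing of \eqref{assoc} is in play and applying \eqref{dist} on the correct side; once the absorber identities are in place the computation closes mechanically.
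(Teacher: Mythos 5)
Your proof is correct and follows essentially the same route as the paper's: the heap axioms are inherited coordinatewise, associativity of $\odot$ is checked by distributing and rebracketing with the absorber identities $a\mathbf 0=\mathbf 0=\mathbf 0 c$, and distributivity follows from Lemma \ref{LEMMA} together with $[\mathbf 0,\mathbf 0,\mathbf 0]=\mathbf 0$. Your explicit remark that the statement tacitly requires $T$ to possess an absorber (used only in the associativity step) is a worthwhile clarification that the paper leaves implicit.
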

    
    \begin{proof}
        Since $(T,[\cdot,\cdot,\cdot])$ is an abelian heap, then $(T\times T,[\cdot,\cdot,\cdot])$ is an abelian heap. Let $a,b,c,x,y,z\in T,$ we have
        \begin{align*}
         \big((a,x)\odot(b,y)\big)\odot(c,z)&=(a\cdot b,[a\cdot y,{\bf 0},x\cdot b])\cdot(c,z)\\
         &=((a\cdot b)\cdot c,[(a\cdot b)\cdot z,{\bf 0},[(a\cdot y)\cdot c,{\bf 0},(x\cdot b)\cdot c])\\
         &=(a\cdot( b\cdot c),[[a\cdot (b\cdot z),{\bf 0},a\cdot (y\cdot c)],{\bf 0},x\cdot (b\cdot c))\\
         &=(a,x)\cdot (b\cdot c,[b\cdot z,{\bf 0},y\cdot c])\\
         &=(a,x)\odot \big((b,y)\odot (c,z)\big).
        \end{align*}
        On the other hand, let $a,b,c,d,x,y,z,t\in T,$ we have
        \begin{align*}
            (a,x)\odot\Courant{(b,y),(c,z),(d,t)}&=(a,x)\odot([b,c,d],[y,z,t])\\
            &=([a\cdot b,a\cdot c,a\cdot d],[[a\cdot y,a\cdot z,a\cdot t],{\bf 0},[x\cdot b,x\cdot c,x\cdot d])\\
            &=([a\cdot b,a\cdot c,a\cdot d],[[a\cdot y,a\cdot z,a\cdot t],[{\bf 0},{\bf 0},{\bf 0}],[x\cdot b,x\cdot c,x\cdot d])\\
            &=([a\cdot b,a\cdot c,a\cdot d],[[a\cdot y,{\bf 0},x\cdot b ],[a\cdot z,{\bf 0},x\cdot c],[a\cdot t, {\bf 0},x\cdot d]]) \\&=[(a\cdot b,[a\cdot y,{\bf 0},x\cdot b ]),(a\cdot c,[a\cdot z,{\bf 0},x\cdot c]),(a\cdot d,[a\cdot t, {\bf 0},x\cdot d])]\\
            &=\Courant{(a,x)\odot(b,y),(a,x)\odot(c,z),(a,x)\odot(d,t)}.
        \end{align*}
        Therefore, $\odot$ is left distributive with respect to  the heap bracket. Similarly, we can check the  right distributivity.
    \end{proof}

More generally, let    $(T,[\cdot,\cdot,\cdot],\cdot)$  be a truss and $e$ be an element in the center  $($i.e. $e\cdot x=x\cdot e)$. Define the truss product   by
    \begin{align}\label{HeapLtimes}
       & (a,x)\boxdot_e (b,y)=(a\cdot b,[a\cdot y,e\cdot x\cdot y,x\cdot b]).
    \end{align}
    \begin{prop}
        Under the above notations, $(T\times T,[\cdot,\cdot,\cdot],\boxdot_e)$ is a truss with the heap structure given in \eqref{HeapLtimes}, which is denoted by $T\bowtie_e T.$
    \end{prop}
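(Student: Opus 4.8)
The plan is to follow the pattern of the proof of Proposition~\ref{TrussTimes}. Since $(T,[\cdot,\cdot,\cdot])$ is an abelian heap, so is $(T\times T,[\cdot,\cdot,\cdot])$ for the componentwise bracket, so what remains is to check that $\boxdot_e$ is associative and that it is left- and right-distributive over this bracket. In each computation I will expand the products by repeated use of the two distributivity laws \eqref{dist} of $T$ and of the associativity of $\cdot$ in $T$, and then match the resulting doubly-nested heap brackets by means of Lemma~\ref{LEMMA} (the interchange law for heaps) together with the Mal'cev identities \eqref{Malcev}.

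For left distributivity, expanding the left-hand side gives
\[
(a,x)\boxdot_e\big([b,c,d],[y,z,t]\big)=\big([ab,ac,ad],\,[[ay,az,at],[exy,exz,ext],[xb,xc,xd]]\big),
\]
while expanding the right-hand side gives
\[
\big([ab,ac,ad],\,[[ay,exy,xb],[az,exz,xc],[at,ext,xd]]\big).
\]
The two second components are precisely the two sides of Lemma~\ref{LEMMA} applied to the array whose rows are $(ay,az,at)$, $(exy,exz,ext)$, $(xb,xc,xd)$, so they coincide. Right distributivity is obtained the same way, now from the array with rows $(at,bt,ct)$, $(ext,eyt,ezt)$, $(xd,yd,zd)$; note that neither of these two verifications uses the centrality of $e$.

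For associativity, expanding both triple products with \eqref{dist} and the associativity of $\cdot$ yields equal first components $(ab)c=a(bc)=abc$ and
\[
\big((a,x)\boxdot_e(b,y)\big)\boxdot_e(c,z)=\big(abc,\,[abz,[eayz,eexyz,exbz],[ayc,exyc,xbc]]\big),
\]
\[
(a,x)\boxdot_e\big((b,y)\boxdot_e(c,z)\big)=\big(abc,\,[[abz,aeyz,ayc],[exbz,exeyz,exyc],xbc]\big).
\]
This is where centrality of $e$ enters: from $e\cdot a=a\cdot e$ and $e\cdot x=x\cdot e$ one gets $eayz=aeyz$ and $eexyz=exeyz$, so the first second component rewrites as $[abz,[aeyz,exeyz,exbz],[ayc,exyc,xbc]]$. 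To identify it with the second, I would rewrite the bare entry $xbc$ there as $[exbz,exbz,xbc]$ using \eqref{Malcev}, apply Lemma~\ref{LEMMA} to the array with rows $(abz,aeyz,ayc)$, $(exbz,exeyz,exyc)$, $(exbz,exbz,xbc)$, and then simplify $[abz,exbz,exbz]=abz$ by \eqref{Malcev} again; the outcome is exactly $[abz,[aeyz,exeyz,exbz],[ayc,exyc,xbc]]$, which proves associativity.

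The only real obstacle is the bookkeeping in the associativity step — keeping the factors $e,x,y,z$ in the correct order through each use of \eqref{dist} — together with the observation that the centrality of $e$ is precisely what forces the two middle columns of the nested brackets to agree; once this is noticed, the identity collapses to a single application of the interchange law (Lemma~\ref{LEMMA}) and two applications of the Mal'cev identities, exactly as in the case of $T\ltimes T$.
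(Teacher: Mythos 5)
Your proof is correct: the paper itself gives no proof of this proposition, leaving it as the evident analogue of Proposition~\ref{TrussTimes}, and your argument follows exactly that template (componentwise heap structure, expansion via \eqref{dist} and associativity of $\cdot$, matching the nested brackets by Lemma~\ref{LEMMA}), with the extra steps specific to the weighted case --- using centrality of $e$ to align the middle entries and the Mal'cev padding $xbc=[exbz,exbz,xbc]$ --- carried out correctly. Nothing further is needed.
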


\section{Rota-Baxter operators on trusses}\label{Sec3}
In this section, we will assume that all trusses  are equipped with absorber elements, denoted by 
$\bf 0$. We introduce and define Rota-Baxter operators within the framework of trusses and provide a characterization of these operators using graph-theoretic methods. Furthermore, we explore the concept of dendriform trusses and investigate their connection to Rota-Baxter operators on trusses, analyzing the interplay and relationships between these structures.
\subsection{Definition, Examples and characterization}
\begin{defn} \label{RBDef}
    Let $(T,[\cdot,\cdot,\cdot],\cdot)$  be a truss.  A map $R:T\to T$ is called a Rota-Baxter  operator of weight $\bf 0$, where $\bf 0$ is an absorber element of $T$,  if it is a heap morphism such that $R({\bf 0})={\bf 0}$ and  satisfying 
    \begin{equation}
        \label{RBCon}
        R(x)\cdot R(y)=R[R(x)\cdot y,{\bf 0},x\cdot R(y)],\quad \forall x,y\in T.
    \end{equation}
\end{defn}
\begin{ex}
Consider the abelian heap $(\mathbb Z,[\cdot,\cdot,\cdot]_+)$ and let $a\in \mathbb Z$. According to Lemma 3.34 in \cite{Brz:par}, the binary operation defined by,
  $$m\cdot_a n = a,\quad \forall m, n \in \mathbb Z$$ 
 makes $(\mathbb Z,[\cdot,\cdot,\cdot]_+)$ into a   truss, in which $a$ is an absorber element. Define the map $R: \mathbb Z \to \mathbb Z$ by
 $$R(m)=[m,a,m]_+,\quad\forall m\in \mathbb Z.$$
 Then $R$ is a Rota-Baxter operator   on $(\mathbb Z,[\cdot,\cdot,\cdot]_+,\cdot_a)$.
\end{ex}

\begin{ex} 
 Observe that among the classes of trusses presented in Example \ref{ClassifZ2}, only classes $(1)$ and $(3)$ admit absorber elements. In the case of the nontrivial class $(3)$, the zero map is the only Rota-Baxter operator of weight $0$. 
\end{ex}

\begin{rmk}
 In \cite{TB1}, the authors introduced the definition of a derivation on a truss. It is well known  for algebras that an invertible derivation is a Rota-Baxter operator. However, the definition given in \cite{TB1} does not have this property. To ensure this one, we give a new definition of a derivation on a truss:   
 a derivation on a truss $(T,[\cdot,\cdot,\cdot],\cdot)$ is a  map $D:T\to T$ which is both a heap's morphism and satisfying 
    \begin{align}
        \label{derivation}
        & D({\bf 0})={\bf 0}, \\
      &  D(x\cdot y)=[D(x)\cdot y,{\bf 0},x\cdot D(y)],\quad \forall x,y\in T.
    \end{align}
    In this case, if $D$ is bijective, then $D^{-1}$ is a Rota-Baxter operator and vice-versa. 
\end{rmk}

\begin{prop}
  Let $(T,[\cdot,\cdot,\cdot],\cdot)$ be a truss, the map $R: T \to T$   is a Rota-Baxter operator of weight $\bf 0$ if and only if its graph 
  $${\sf Gr}(R):=\{(R(x),x),\  x\in T\}$$
  is a sub-truss of $T\ltimes T.$
\end{prop}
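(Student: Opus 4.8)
The statement is a standard "graph is a substructure" characterization, so the plan is to unwind the definitions of the truss $T\ltimes T$ from Proposition~\ref{TrussTimes} and match the substructure conditions against the defining properties of a Rota-Baxter operator in Definition~\ref{RBDef}. Concretely, a subset $S\subseteq T\ltimes T$ is a subtruss iff it is a subheap (closed under the ternary bracket $\Courant{\cdot,\cdot,\cdot}$ of $T\times T$) and closed under the product $\odot$. So I would split the proof into two halves: first show $\mathsf{Gr}(R)$ is a subheap exactly when $R$ is a heap morphism; then show $\mathsf{Gr}(R)$ is closed under $\odot$ exactly when $R$ satisfies the Rota-Baxter identity \eqref{RBCon} (together with $R({\bf 0})={\bf 0}$, which ensures the absorber $({\bf 0},{\bf 0})$ behaves correctly — though since a subheap need not contain any distinguished element, the condition $R({\bf 0})={\bf 0}$ is really only needed to make the product formula land inside the graph, because $\odot$ explicitly invokes ${\bf 0}$).

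For the subheap direction: given $x,y,z\in T$, compute
$\Courant{(R(x),x),(R(y),y),(R(z),z)}=([R(x),R(y),R(z)],[x,y,z])$.
This lies in $\mathsf{Gr}(R)$, i.e.\ equals $(R([x,y,z]),[x,y,z])$, precisely when $[R(x),R(y),R(z)]=R([x,y,z])$ for all $x,y,z$, which is exactly the condition that $R$ is a heap morphism. (One should note that checking the bracket on triples of the special form $(R(x),x)$ is equivalent to checking it in general, since every element of $\mathsf{Gr}(R)$ has this shape and the second coordinate ranges over all of $T$.)

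For the product direction: compute, using the formula $(a,x)\odot(b,y)=(a\cdot b,[a\cdot y,{\bf 0},x\cdot b])$ from Proposition~\ref{TrussTimes},
\[
(R(x),x)\odot(R(y),y)=\bigl(R(x)\cdot R(y),\,[R(x)\cdot y,{\bf 0},x\cdot R(y)]\bigr).
\]
This element lies in $\mathsf{Gr}(R)$ iff its first coordinate is the image under $R$ of its second coordinate, i.e.\ iff
$R(x)\cdot R(y)=R\bigl([R(x)\cdot y,{\bf 0},x\cdot R(y)]\bigr)$, which is exactly \eqref{RBCon}. So closure of $\mathsf{Gr}(R)$ under $\odot$ is equivalent to the Rota-Baxter identity. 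Combining the two halves: $\mathsf{Gr}(R)$ is a subtruss of $T\ltimes T$ iff $R$ is a heap morphism satisfying \eqref{RBCon}; and when $R$ is a heap morphism, $R({\bf 0})=R([{\bf 0},{\bf 0},{\bf 0}])=[R({\bf 0}),R({\bf 0}),R({\bf 0})]=R({\bf 0})$ is automatic while the genuine constraint $R({\bf 0})={\bf 0}$ is part of the hypothesis, so one must take care to state that $R({\bf 0})={\bf 0}$ is assumed (or observe it follows from \eqref{RBCon} with $x=y={\bf 0}$ plus heap-morphism: $R({\bf 0})\cdot R({\bf 0})=R[R({\bf 0})\cdot{\bf 0},{\bf 0},{\bf 0}\cdot R({\bf 0})]=R[{\bf 0},{\bf 0},{\bf 0}]=R({\bf 0})$, which does not immediately give $R({\bf 0})={\bf 0}$, so it genuinely is an extra axiom).

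The main obstacle — really the only subtlety — is bookkeeping the logical equivalence cleanly: one must argue that verifying the two closure conditions on elements of the special parametrized form $(R(x),x)$ is equivalent to the universally quantified operator identities, and one must be honest about where $R({\bf 0})={\bf 0}$ enters (it is used to see that $({\bf 0},{\bf 0})\in\mathsf{Gr}(R)$, which matters if one wants the absorber of $T\ltimes T$ to live in the subtruss, but is not forced by the subheap/subsemigroup closure alone). No hard computation is involved; associativity and distributivity of $\odot$ are already established in Proposition~\ref{TrussTimes}, so closure is all that remains to check.
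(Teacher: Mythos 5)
Your proposal is correct and follows the same route as the paper: compute $(R(x),x)\odot(R(y),y)=\bigl(R(x)\cdot R(y),[R(x)\cdot y,{\bf 0},x\cdot R(y)]\bigr)$ and observe that membership in ${\sf Gr}(R)$ is exactly the Rota-Baxter identity \eqref{RBCon}, while closure under the bracket is exactly the heap-morphism condition. The one place you go beyond the paper's (very terse) argument is in flagging that the axiom $R({\bf 0})={\bf 0}$ from Definition~\ref{RBDef} is not recovered from subheap-plus-product closure of the graph alone; the paper's proof is silent on this, so your remark that the stated equivalence implicitly needs either to assume $R({\bf 0})={\bf 0}$ separately or to strengthen ``sub-truss'' to one containing the absorber $({\bf 0},{\bf 0})$ is a legitimate and worthwhile precision, not an error on your part.
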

\begin{proof} 
It is easy to check that $R$ is a heap homomorphism if and only if ${\sf Gr}(R)$ is a sub-heap of $T \ltimes T$.
    Let $(R(x),x),(R(y),y)\in{\sf Gr}(R)$, we have
    $$(R(x),x)\odot(R(y),y)=(R(x)\cdot R(y),[R(x)\cdot y,{\bf 0},x\cdot R(y)]).$$
    Then, $R$   is a Rota-Baxter operator of weight $\bf 0$ if and only if its graph 
  ${\sf Gr}(R)$.
\end{proof}

\subsection{Derived and related structures}
\begin{thm}
  Let $(T,[\cdot,\cdot,\cdot],\cdot)$ be a truss and $R: T \to T$ be a Rota-Baxter operator of weight $\bf 0$. Define the product 
  \begin{align}
      & x \diamond y =[R(x)\cdot y,{\bf 0},x\cdot R(y)].
  \end{align}
  Then  $(T,[\cdot,\cdot,\cdot],\diamond)$ is also a truss with the same absorber element.
\end{thm}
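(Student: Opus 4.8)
The plan is to verify the two truss axioms for the new operation $\diamond$: associativity of $(T,\diamond)$, and left/right distributivity of $\diamond$ over the heap bracket $[\cdot,\cdot,\cdot]$. The heap structure is unchanged, so it remains an abelian heap, and $\mathbf{0}$ remains an absorber for $\diamond$ since $\mathbf{0}\diamond y=[R(\mathbf{0})\cdot y,\mathbf{0},\mathbf{0}\cdot R(y)]=[\mathbf{0},\mathbf{0},\mathbf{0}]=\mathbf{0}$, using $R(\mathbf{0})=\mathbf{0}$, that $\mathbf{0}$ absorbs for $\cdot$, and the Mal'cev identity; similarly on the right. So the real content is associativity and distributivity.

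First I would handle distributivity, which should be the easy part. Expanding $x\diamond[b,c,d]=[R(x)\cdot[b,c,d],\mathbf{0},x\cdot R([b,c,d])]$, one uses that $R$ is a heap morphism so $R([b,c,d])=[R(b),R(c),R(d)]$, then applies left distributivity of $\cdot$ over $[\cdot,\cdot,\cdot]$ twice to get $[[R(x)b,R(x)c,R(x)d],\mathbf{0},[xR(b),xR(c),xR(d)]]$. Rewriting $\mathbf{0}=[\mathbf{0},\mathbf{0},\mathbf{0}]$ and applying Lemma \ref{LEMMA} to transpose the bracket array yields $[[R(x)b,\mathbf{0},xR(b)],[R(x)c,\mathbf{0},xR(c)],[R(x)d,\mathbf{0},xR(d)]]=[x\diamond b,x\diamond c,x\diamond d]$. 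Right distributivity is symmetric.

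The main obstacle is associativity: one must show $(x\diamond y)\diamond z=x\diamond(y\diamond z)$. The natural approach is to expand both sides fully using the definition of $\diamond$, the distributivity of $\cdot$ over the bracket, associativity of $\cdot$, Lemma \ref{LEMMA}, and—crucially—the Rota-Baxter identity \eqref{RBCon} to rewrite $R(x)\cdot R(y)$ appearing inside. Concretely, on the left one gets a bracket expression involving $R(x\diamond y)$; since $x\diamond y=[R(x)y,\mathbf{0},xR(y)]$ and $R$ is a heap morphism, $R(x\diamond y)=[R(R(x)y),\mathbf{0},R(xR(y))]$, and one recognizes $[R(R(x)y),\mathbf{0},R(xR(y))]=R(x)\diamond R(y)$... but one must be careful: the correct simplification comes from \eqref{RBCon}, namely $R(x)R(y)=R[R(x)y,\mathbf{0},xR(y)]=R(x\diamond y)$. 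Using this, $(x\diamond y)\diamond z$ becomes $[R(x\diamond y)z,\mathbf{0},(x\diamond y)R(z)]=[(R(x)R(y))z,\mathbf{0},(x\diamond y)R(z)]$. Expanding $(x\diamond y)R(z)=[R(x)yR(z),\mathbf{0},xR(y)R(z)]$ via right distributivity, and likewise expanding $x\diamond(y\diamond z)=[R(x)(y\diamond z),\mathbf{0},x(R(y)R(z))]$ with $R(y)R(z)=R(y\diamond z)$ and $R(x)(y\diamond z)=[R(x)R(y)z,\mathbf{0},R(x)yR(z)]$, both sides reduce—after reassociating $\cdot$ and transposing bracket arrays via Lemma \ref{LEMMA}—to the common expression built from the three "words" $R(x)R(y)z$, $R(x)yR(z)$, and $xR(y)R(z)$ arranged in a nested bracket. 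I expect the bookkeeping of which terms land in which slot of the triple bracket to be the delicate point; organizing the computation so that after one application of \eqref{RBCon} on each side the remaining manipulations are purely heap/semigroup identities is the key to keeping it manageable.
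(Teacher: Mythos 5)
Your proposal is correct and follows essentially the same route as the paper's proof: one application of the Rota--Baxter identity \eqref{RBCon} on each side of the associativity equation reduces both to bracket expressions in the three words $R(x)R(y)z$, $R(x)yR(z)$, $xR(y)R(z)$ that agree by heap associativity, while distributivity follows from $R$ being a heap morphism together with Lemma \ref{LEMMA}. You additionally verify that $\mathbf{0}$ remains an absorber for $\diamond$, a claim in the statement that the paper's own proof leaves unchecked, so that is a welcome extra.
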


\begin{proof}
Let $x,y,z,t \in T$. We begin by checking the associativity of $\diamond$. We have
\begin{align*}
  (x \diamond y) \dd z= &[R(x) \cdot y, {\bf 0}, x \cdot R(y)]\dd z \\
 =& [R(x)\cdot R(y) \cdot z,{\bf 0},[R(x)\cdot y \cdot R(z),{\bf 0},x \cdot R(y) \cdot R(z)]]
\end{align*}
and 
\begin{align*}
    x \dd (y \dd z)=& x\dd [R(y)\cdot z,{\bf 0},y \cdot R(z)] \\
    =& [[R(x)\cdot R(y) \cdot z,{\bf 0},R(x)\cdot y \cdot R(z)],{\bf 0},x \cdot R(y) \cdot R(z)].
\end{align*}
Using the associativity of the heap bracket $[\cdot,\cdot,\cdot]$, we conclude that $(x\dd y)\dd z=x \dd (y \dd z)$.
Now, it remains to prove the distributivity  laws. Using Lemma \ref{LEMMA} and since $[{\bf 0},{\bf 0},{\bf 0}]={\bf 0}$, we have 
 \begin{align*}
     x \dd [y,z,t]=& [[R(x)\cdot y,R(x)\cdot z,R(x)\cdot t],{\bf 0},[x\cdot R(y), x \cdot R(z), x \cdot R(t)] ]\\
     =& [[R(x)\cdot y,{\bf 0},x \cdot R(y)],[R(x)\cdot z,{\bf 0},x \cdot R(z)],[R(x)\cdot t,{\bf 0},x \cdot R(t)]] \\
     =& [x \dd y, x \dd z,x \dd t].
 \end{align*}
 The right distributivity can be similarly  checked  and the proof is finished. 
\end{proof}

Now, we introduce the notion of dendriform truss which is a splitting version of trusses, as we will see later. In addition, it leads to a generalized concept of dendriform rings in the same way as trusses can be considered as a generalization of rings. Moreover, the correspondence between trusses and rings is invariant by dendrification. In the literature, the notion of dendriform ring was  not defined although that of dendriform algebra was introduced by Loday. So we first give the definition of a dendriform truss. 

\begin{defn}
A dendriform truss is a tuple $(T,[\cdot,\cdot,\cdot],\prec,\succ,{\bf 0})$, where $(T,[\cdot,\cdot,\cdot])$ is an abelian heap,  ${\bf 0}\in T$ is a distinguished element and $\prec,\succ: T \otimes T \to T$ are two binary products satisfying:
\begin{align}
&x \succ {\bf 0}={\bf 0} \succ x={\bf 0} \prec x=x \prec {\bf 0}={\bf 0}, \label{dend1} \\
& [x,y,z]\succ t=[x\succ t,y\succ t,z \succ t],\quad 
x \prec [y,z,t]=[x\prec y,x \prec z,x \prec t],
\label{dend2}\\
 &   [x \succ y,{\bf 0},x \prec y] \succ z= x \succ (y \succ z), \label{dend3} \\
  & (x \succ y) \prec z=  x \succ (y \prec z), \label{dend4} \\
  & (x \prec y) \prec z=x \prec  [y \succ z , {\bf 0}, y \prec z ], \label{dend5}
\end{align}
for all $x,y,z \in T$. Similarly to trusses, the element $\bf 0$ is called the absorber element of the dendriform truss $T$. 
\end{defn}

\begin{thm}\label{DendTrusToTrus}
 Let $(T,[\cdot,\cdot,\cdot],\prec,\succ,{\bf 0})$  be a dendriform truss. Then $(T,[\cdot,\cdot,\cdot],\circ)$ is a truss, where
 \begin{align}
  &  x \dd y= [x \succ y,{\bf 0},x \prec y].  
 \end{align}
The tuple $(T,[\cdot,\cdot,\cdot],\dd)$ is called the subadjacent  truss of  $(T,[\cdot,\cdot,\cdot],\prec,\succ,{\bf 0})$.
\end{thm}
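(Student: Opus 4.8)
The claim is that the product $x \dd y = [x \succ y, {\bf 0}, x \prec y]$ makes $(T,[\cdot,\cdot,\cdot],\dd)$ a truss with absorber ${\bf 0}$. There are three things to verify: (i) ${\bf 0}$ is an absorber for $\dd$; (ii) $\dd$ is associative; (iii) $\dd$ distributes over the heap bracket on both sides. Item (i) is immediate from \eqref{dend1}: $x \dd {\bf 0} = [x \succ {\bf 0}, {\bf 0}, x \prec {\bf 0}] = [{\bf 0},{\bf 0},{\bf 0}] = {\bf 0}$, and symmetrically ${\bf 0} \dd x = {\bf 0}$. Item (iii) follows from \eqref{dend2} together with Lemma \ref{LEMMA}: expanding $x \dd [y,z,t] = [x \succ [y,z,t], {\bf 0}, x \prec [y,z,t]] = [[x\succ y, x\succ z, x\succ t], [{\bf 0},{\bf 0},{\bf 0}], [x\prec y, x\prec z, x\prec t]]$ and then applying the interchange identity of Lemma \ref{LEMMA} rearranges this into $[[x\succ y,{\bf 0},x\prec y], [x\succ z,{\bf 0},x\prec z], [x\succ t,{\bf 0},x\prec t]] = [x\dd y, x\dd z, x\dd t]$; the right-hand distributivity is the mirror computation using the other halves of \eqref{dend1} and \eqref{dend2}.

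The substantive step is associativity, which is where the five dendriform axioms \eqref{dend1}–\eqref{dend5} (minus \eqref{dend1}) are designed to combine. The plan is to expand both $(x\dd y)\dd z$ and $x\dd(y\dd z)$ using distributivity of $\succ$ and $\prec$ over the heap bracket (i.e.\ \eqref{dend2}) until each side is a heap-bracket expression whose leaves are triple products in $\prec,\succ$, then match leaves using \eqref{dend3}, \eqref{dend4}, \eqref{dend5}. Concretely, $(x\dd y)\dd z = [x\succ y,{\bf 0},x\prec y]\dd z = [\,[x\succ y,{\bf 0},x\prec y]\succ z,\ {\bf 0},\ [x\succ y,{\bf 0},x\prec y]\prec z\,]$. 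Now $[x\succ y,{\bf 0},x\prec y]\succ z = x\succ(y\succ z)$ by \eqref{dend3} directly, and $[x\succ y,{\bf 0},x\prec y]\prec z$ expands via \eqref{dend2} (right-distributivity form — note $\prec$ is right-additive) to $[(x\succ y)\prec z,\ {\bf 0},\ (x\prec y)\prec z]$, which by \eqref{dend4} and \eqref{dend5} equals $[x\succ(y\prec z),\ {\bf 0},\ x\prec[y\succ z,{\bf 0},y\prec z]]$. So $(x\dd y)\dd z = [\,x\succ(y\succ z),\ {\bf 0},\ [x\succ(y\prec z),{\bf 0},x\prec(y\dd z)]\,]$. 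On the other side, $x\dd(y\dd z) = x\dd[y\succ z,{\bf 0},y\prec z] = [\,x\succ[y\succ z,{\bf 0},y\prec z],\ {\bf 0},\ x\prec[y\succ z,{\bf 0},y\prec z]\,]$, and the first slot expands by \eqref{dend2} to $[x\succ(y\succ z),{\bf 0},x\succ(y\prec z)]$. Using heap associativity \eqref{assoc} and commutativity \eqref{comm} to reassociate the nested brackets, both sides reduce to the common expression $[\,[x\succ(y\succ z),{\bf 0},x\succ(y\prec z)],\ {\bf 0},\ x\prec(y\dd z)\,]$.

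The main obstacle — really a bookkeeping hazard rather than a conceptual one — is the reassociation of nested heap brackets: one must be careful that a four-leaf expression like $[[a,{\bf 0},b],{\bf 0},c]$ equals $[a,{\bf 0},[b,{\bf 0},c]]$, which is exactly \eqref{assoc} after inserting the neutral ${\bf 0}$'s in the right places, and that $x\prec(y\dd z)$ genuinely matches $(x\prec y)\prec z$'s rewritten form via \eqref{dend5} with $y\dd z = [y\succ z,{\bf 0},y\prec z]$. I would write the argument by transforming $(x\dd y)\dd z$ step by step into $x\dd(y\dd z)$ in a single \texttt{align*} chain, citing \eqref{dend2}–\eqref{dend5}, Lemma \ref{LEMMA}, and the heap axioms \eqref{assoc}, \eqref{comm} at each step, and then close with the one-line verifications of the distributivity laws and the absorber property as described above.
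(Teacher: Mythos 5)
Your proof is correct and follows essentially the same route as the paper's: expand $(x\dd y)\dd z$, apply \eqref{dend3} to the $\succ$-slot, distribute $\prec$ over the bracket and invoke \eqref{dend4} and \eqref{dend5}, then reassociate using the heap axioms, with the distributivity laws handled via \eqref{dend2} and Lemma \ref{LEMMA} exactly as in the paper (your extra verification of the absorber property via \eqref{dend1} is a harmless addition). One small remark: the step $[x\succ y,{\bf 0},x\prec y]\prec z=[(x\succ y)\prec z,{\bf 0},(x\prec y)\prec z]$ requires distributivity of $\prec$ in its \emph{left} argument, which is not the half recorded in \eqref{dend2} (your parenthetical labels it as right-additivity of $\prec$, which is the other half) — but the paper's own proof makes the identical implicit use of this unstated distributivity, so this is not a defect of your argument relative to the paper.
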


\begin{proof}
 Let $x,y,z,t \in T$. We start by proving the associativity of  the product $\dd$. 
 \begin{align*}
  (x \dd y)\dd z=& [x\succ y,{\bf 0},x \prec y]\dd z \\
  =& [[x\succ y,{\bf 0},x \prec y]\succ z,{\bf 0},[x\succ y,{\bf 0},x \prec y]\prec z] \\
  =& [ x \succ (y \succ z),  {\bf 0},  [(x \succ y)\prec z,{\bf 0},(x\prec y)\prec z]    ] \\
    =& [ [x \succ (y \succ z),  {\bf 0},  x \succ (y\prec z)],{\bf 0},(x\prec y)\prec z    ] \\
    =& [x \succ [y \succ z,{\bf 0},y \prec z],{\bf 0},x\prec [y \succ z,{\bf 0},y \prec z]   ] \\
    =& x \dd (y\dd z).
 \end{align*}
 On the other hand, we have
 \begin{align*}
   x \dd [y,z,t]=& [x \succ [y,z,t],{\bf 0},x \prec [y,z,t]] \\
   =& [[x \succ y,x \succ z,x \succ t], {\bf 0}, [x \prec y,x \prec z,x\prec t] ] \\
   =&[ [x\succ y,{\bf 0},x \prec y],[x\succ z,{\bf 0},x \prec z], [x\succ t,{\bf 0},x \prec t] ]\quad  (\text{Lemma}\  \ref{LEMMA}) \\
   =& [x \dd y, x \dd z, x \dd t].
 \end{align*}
 The right distributivity can be proved in the same way. 
\end{proof}    

Conversely, if we have a truss $T$, we can get a dendriform truss  on the same heap structure of $T$ by means of a Rota-Baxter operator. 
\begin{thm}\label{TrusToDendTrus}
   Let $(T,[\cdot,\cdot,\cdot],\cdot)$ be a truss and $R: T \to T$ be a Rota-Baxter operator of weight $\bf 0$. 
  Then the binary products
  \begin{align} \label{dend truss}
      &  x \succ y= R(x) \cdot y, \quad x \prec y= x \cdot R(y)
  \end{align}
  carry a dendriform truss structure on $T$ with absorber element ${\bf 0}$. In addition, $R$ is a truss morphism.
  
\end{thm}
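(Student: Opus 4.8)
The plan is to verify, one by one, the axioms \eqref{dend1}--\eqref{dend5} of a dendriform truss for the products $x\succ y = R(x)\cdot y$ and $x\prec y = x\cdot R(y)$, using only that $R$ is a heap morphism fixing $\bf 0$ and satisfies the Rota-Baxter identity \eqref{RBCon}. First I would dispatch the absorber conditions \eqref{dend1}: since $R({\bf 0})={\bf 0}$ and $\bf 0$ is an absorber for the truss multiplication, we get $x\succ{\bf 0} = R(x)\cdot{\bf 0} = {\bf 0}$, ${\bf 0}\succ x = R({\bf 0})\cdot x = {\bf 0}\cdot x = {\bf 0}$, and symmetrically for $\prec$. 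Next, the distributivity conditions \eqref{dend2}: left distributivity of the truss multiplication over the heap bracket gives $x\prec[y,z,t] = x\cdot R[y,z,t] = x\cdot[R(y),R(z),R(t)] = [x\cdot R(y), x\cdot R(z), x\cdot R(t)] = [x\prec y, x\prec z, x\prec t]$, where I use that $R$ is a heap morphism; the identity $[x,y,z]\succ t = [x\succ t, y\succ t, z\succ t]$ follows the same way from right distributivity of the truss product, noting $[x,y,z]\succ t = R[x,y,z]\cdot t = [R(x),R(y),R(z)]\cdot t$.

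The substantive part is the three compatibility identities \eqref{dend3}--\eqref{dend5}. For \eqref{dend4} I would simply compute $(x\succ y)\prec z = (R(x)\cdot y)\cdot R(z) = R(x)\cdot(y\cdot R(z)) = R(x)\cdot(y\prec z) = x\succ(y\prec z)$, which is just associativity of the truss multiplication. For \eqref{dend3}, the left side is $[x\succ y,{\bf 0},x\prec y]\succ z = R[R(x)\cdot y,{\bf 0},x\cdot R(y)]\cdot z$, and here I invoke the Rota-Baxter identity \eqref{RBCon} to rewrite $R[R(x)\cdot y,{\bf 0},x\cdot R(y)] = R(x)\cdot R(y)$; hence the left side becomes $(R(x)\cdot R(y))\cdot z = R(x)\cdot(R(y)\cdot z) = R(x)\cdot(y\succ z) = x\succ(y\succ z)$, as desired. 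Identity \eqref{dend5} is the mirror image: the right side is $x\prec[y\succ z,{\bf 0},y\prec z] = x\cdot R[R(y)\cdot z,{\bf 0},y\cdot R(z)] = x\cdot(R(y)\cdot R(z)) = (x\cdot R(y))\cdot R(z) = (x\prec y)\cdot R(z) = (x\prec y)\prec z$, again using \eqref{RBCon} and associativity. So every dendriform-truss axiom reduces to associativity of $\cdot$, distributivity of $\cdot$ over the bracket, the heap-morphism property of $R$, and a single application of the Rota-Baxter relation; there is no real obstacle, only careful bookkeeping.

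Finally, to see that $R$ is a truss morphism from $(T,[\cdot,\cdot,\cdot],\dd)$ to $(T,[\cdot,\cdot,\cdot],\cdot)$ — where $\dd$ is the subadjacent product $x\dd y = [x\succ y,{\bf 0},x\prec y] = [R(x)\cdot y,{\bf 0},x\cdot R(y)]$ from Theorem \ref{DendTrusToTrus} — I note that $R$ is a heap morphism by hypothesis, and $R(x\dd y) = R[R(x)\cdot y,{\bf 0},x\cdot R(y)] = R(x)\cdot R(y)$ is precisely the content of \eqref{RBCon}. Thus $R$ preserves both the heap bracket and the multiplication, hence is a morphism of trusses, completing the proof. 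The only point requiring a word of care is that the subadjacent product $\dd$ coincides with the product $\diamond$ introduced in the earlier theorem, so the construction is consistent; this is immediate from comparing the two defining formulas.
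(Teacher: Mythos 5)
Your proof is correct and follows essentially the same route as the paper's: direct verification of \eqref{dend1}--\eqref{dend5} using $R(\mathbf 0)=\mathbf 0$, the heap-morphism property, associativity, distributivity, and a single application of \eqref{RBCon} for \eqref{dend3} and \eqref{dend5}. In fact your write-up is slightly more complete, since the paper leaves \eqref{dend5} and the second half of \eqref{dend2} to the reader and does not explicitly verify that $R$ is a truss morphism, which you do correctly via $R(x\dd y)=R[R(x)\cdot y,\mathbf 0,x\cdot R(y)]=R(x)\cdot R(y)$.
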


\begin{proof}
Let $x,y,z,t \in T$. First note that since $R({\bf 0})={\bf 0}$, then Equation \eqref{dend1} holds. In addition, we have 
\begin{align*}
    [x,y,z] \succ t=& [R(x),R(y),R(z)]\cdot t \\
    =& [R(x) \cdot t,R(y) \cdot t, R(z) \cdot t] \\
    =& [x \succ t, y \succ t, z \succ t].
\end{align*}
Then the first identity   \eqref{dend2} holds and the second can be done similarly. 
Furthermore, 
\begin{align*}
  [x \succ y, {\bf 0}, x \prec y] \succ z =& [R(x)\cdot y,{\bf 0}, x \cdot R(y)] \succ z \\
=& R[R(x)\cdot y,{\bf 0}, x \cdot R(y)] \cdot z  \\
=& (R(x) \cdot R(y) ) \cdot z  \\
=& R(x) \cdot (R(y) \cdot z) =x \succ (y \succ z)
\end{align*}
and then Equation \eqref{dend3} holds. To check Equation \eqref{dend4}, we compute as follows
\begin{align*}
   (x \succ y) \prec z=& (R(x) \cdot y) \prec z \\
   =& (R(x) \cdot y) \cdot R(z) \\
   =& R(x) \cdot ( y \cdot R(z))=x \succ (y \prec z). 
\end{align*}
The identity \eqref{dend5} can be proved in a similar way as identity \eqref{dend3}. 
\end{proof}

\begin{defn}
Let $(A,+,0)$ be an abelian group   and $\succ,\prec: A \otimes A \to A$ be two binary products. The triple $(A,+,\succ,\prec)$ is called a dendriform ring provided that:
\begin{align*}
&x\succ 0=0\prec x=0,\\
& (x+y)\succ z=x\succ z+y\succ z,\quad 
x \prec (y+z)=x\prec y+x \prec z,
\\
  & (x \succ y+x \prec y) \succ z=x \succ (y \succ z),\\
  & (x \succ y) \prec z=x \succ (y \prec z),\\
  &(x \prec y) \prec z= x \prec (y \succ z+y \prec z),
\end{align*}
for all $x,y,z \in A$. 
\end{defn}

\begin{rmk}
 Note that if $(A,+,\succ,\prec)$ is a dendriform ring, then $(A,+,\times)$ is a ring called the subadjacent ring, where $$ x \times y= x \succ y + x \prec y,\quad \forall x,y \in A.$$   
\end{rmk}

\begin{thm}
 There is a one-to-one correspondence between dendriform rings and dendriform trusses.    
\end{thm}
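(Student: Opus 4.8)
The plan is to extend the ring/truss dictionary of Remark~\ref{TrussesRings} to the split setting. Concretely, from a dendriform ring $(A,+,\succ,\prec)$ I would build a dendriform truss by keeping the two products $\succ,\prec$, taking the heap $\mathcal{H}(A,+)$ as underlying heap, and declaring ${\bf 0}:=0$; conversely, from a dendriform truss $(T,[\cdot,\cdot,\cdot],\succ,\prec,{\bf 0})$ I would produce a dendriform ring by keeping $\succ,\prec$ and replacing the heap with its retract group $\mathcal{G}(T;{\bf 0})$ (so $x+y=[x,{\bf 0},y]$ and $-x=[{\bf 0},x,{\bf 0}]$). The bulk of the argument is then to check that each assignment lands where claimed and that the two are mutually inverse.

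For the first direction I would verify \eqref{dend1}--\eqref{dend5} for $(\mathcal{H}(A,+),\succ,\prec,0)$. Here \eqref{dend1} is the dendriform-ring vanishing axiom together with the consequences $0\succ z=x\prec 0=0$ and $(-y)\succ z=-(y\succ z)$, $x\prec(-z)=-(x\prec z)$, all of which follow from additivity of $\succ,\prec$ in each slot; \eqref{dend2} is then immediate since $[x,y,z]\succ t=(x-y+z)\succ t=x\succ t-y\succ t+z\succ t$, and dually for $\prec$; and since $[x\succ y,{\bf 0},x\prec y]=x\succ y+x\prec y$ in $\mathcal{H}(A,+)$, the axioms \eqref{dend3} and \eqref{dend5} become precisely the two ``associativity'' identities of a dendriform ring, while \eqref{dend4} is literally the third. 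For the reverse direction, reading the dendriform-truss axioms inside $\mathcal{G}(T;{\bf 0})$ and using $[a,{\bf 0},b]=a+b$ turns \eqref{dend1} into the vanishing axiom, \eqref{dend2} (with \eqref{dend1}) into additivity, e.g. $(x+y)\succ z=[x,{\bf 0},y]\succ z=[x\succ z,{\bf 0},y\succ z]=x\succ z+y\succ z$, and \eqref{dend3}--\eqref{dend5} into the remaining three dendriform-ring identities.

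To close the correspondence I would observe that the two constructions invert each other: on the ring side $\mathcal{G}(\mathcal{H}(A,+);0)=(A,+)$ with the products unchanged, and on the truss side $\mathcal{H}(\mathcal{G}(T;{\bf 0}))=(T,[\cdot,\cdot,\cdot])$ --- this is exactly the heap-level identity behind $T=\mathcal{T}(\mathcal{R}(T;{\bf 0}))$ in Remark~\ref{TrussesRings} --- again with $\succ,\prec$ and ${\bf 0}$ unchanged. I would also note that a morphism of dendriform trusses fixing ${\bf 0}$ is in particular a heap morphism, hence a group homomorphism of retracts preserving $\succ,\prec$, and conversely, so the correspondence is functorial in both directions (indeed an isomorphism of categories), and that it intertwines the two ``subadjacent'' constructions: by Theorem~\ref{DendTrusToTrus} and Remark~\ref{TrussesRings}, the subadjacent truss of the dendriform truss attached to $A$ is $\mathcal{T}$ of the subadjacent ring of $A$.

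I do not expect any genuine obstacle; the correspondence is essentially a change of notation, heap bracket $[a,{\bf 0},b]$ versus group sum $a+b$. The only point requiring a little care is the sign bookkeeping in the ring-to-truss direction --- establishing $(-y)\succ z=-(y\succ z)$ and $x\prec(-z)=-(x\prec z)$ before expanding the ternary bracket --- after which every dendriform-truss axiom matches its dendriform-ring counterpart term by term.
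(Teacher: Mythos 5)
Your proposal is correct and takes essentially the same route as the paper: both directions pass between the retract group $\mathcal{G}(T;\mathbf{0})$ (i.e.\ $x+y=[x,\mathbf{0},y]$) and the heap $\mathcal{H}(A,+)$ while keeping $\succ,\prec$ unchanged. The paper leaves the axiom-by-axiom verification as ``we can check,'' whereas you actually carry it out (including the sign bookkeeping and the observation that the two constructions are mutually inverse), so your write-up is a more detailed version of the same argument.
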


\begin{proof}
Let $(T,[\cdot,\cdot,\cdot],\prec,\succ,{\bf 0})$ be a dendriform truss. Define 
$$ x + y =[x,{\bf 0},y], \forall x,y \in T.$$
Then $(T,+)$ is an abelian group with a neutral element ${\bf 0}$. 
We can easily  verify that $(T,+,\succ,\prec)$ is a dendriform ring. 
On the other hand, let $(A,+,\succ,\prec)$ be a dendriform ring and denote by $0$ the neutral element of the addition. Define the ternary product
$$ [x,y,z]=x-y+z,\ \forall x,y,z \in A.$$
Then $(A,[\cdot,\cdot,\cdot])$ is an abelian heap and we can check that $(A,[\cdot,\cdot,\cdot],\succ,\prec,0)$ is a dendriform truss. 
\end{proof}

The notion of Rota-Baxter operator on rings haven't been introduced yet  despite its importance. So we give its definition:
Let $(A,+,\times)$ be a ring and $R: A \to A $ be a group homomorphism. We say that $R$ is a Rota-Baxter operator on $A$ if it satisfies the following identity 
\begin{equation}
    \label{RBRing}R(x)\times R(y)=R(R(x)\times y+x\times R(y)),
\end{equation}
for all $x,y \in A$. The following observation is straightforward.
\begin{prop}
 Let   $(A,+,\times)$ be a ring and $R: A \to A $ be a Rota-Baxter operator. Then the following products
 \begin{equation}
     \label{ProductDendRing}x\succ y=R(x)\times y\quad \text{and}\quad x\prec y=x\times R(y),\quad \forall x,y\in A,
 \end{equation}
 provide a dendriform ring structure on $A$. 
\end{prop}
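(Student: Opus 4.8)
The plan is to verify directly that the pair of products $(\succ,\prec)$ defined in \eqref{ProductDendRing} satisfies the five defining identities of a dendriform ring, using only that $R$ is a group homomorphism, that $\times$ is associative and biadditive, and the Rota-Baxter identity \eqref{RBRing}. This is essentially the ring-theoretic shadow of Theorem \ref{TrusToDendTrus}, so no surprises are expected; the only input with any real content is \eqref{RBRing}, and it enters in exactly two of the five identities.

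First I would clear the routine axioms. The absorption law $x\succ 0 = 0\prec x = 0$ is immediate, since $a\times 0 = 0\times a = 0$ in any ring. Additivity of $\succ$ in its left argument follows from additivity of $R$ and left distributivity of $\times$ over $+$: $(x+y)\succ z = R(x+y)\times z = (R(x)+R(y))\times z = R(x)\times z + R(y)\times z = x\succ z + y\succ z$; the identity $x\prec(y+z) = x\prec y + x\prec z$ is symmetric, using right distributivity.

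Next come the three associativity-type identities, which are where \eqref{RBRing} is used. For the first, $(x\succ y + x\prec y)\succ z = R\bigl(R(x)\times y + x\times R(y)\bigr)\times z$, and applying \eqref{RBRing} to the argument of $R$ collapses this to $\bigl(R(x)\times R(y)\bigr)\times z$, which equals $R(x)\times\bigl(R(y)\times z\bigr) = x\succ(y\succ z)$ by associativity of $\times$. The middle identity $(x\succ y)\prec z = x\succ(y\prec z)$ is pure associativity: both sides equal $R(x)\times y\times R(z)$. For the last identity, $(x\prec y)\prec z = \bigl(x\times R(y)\bigr)\times R(z) = x\times\bigl(R(y)\times R(z)\bigr)$, while $x\prec(y\succ z + y\prec z) = x\times R\bigl(R(y)\times z + y\times R(z)\bigr) = x\times\bigl(R(y)\times R(z)\bigr)$ again by \eqref{RBRing}; hence the two sides agree, completing the verification.

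I do not anticipate a genuine obstacle: the computation is short and formal, and the only points worth flagging are that $A$ is not assumed unital (so no identity element is ever invoked) and that it is precisely the vanishing of the weight that makes the inner arguments of $R$ in \eqref{RBRing} reduce cleanly, with no extra $R(x\times y)$ term to carry. One could also bypass the computation entirely: regarding $A$ as the truss $\mathcal{T}(A)$ with absorber $0$, identity \eqref{RBRing} says exactly that $R$ is a Rota-Baxter operator of weight ${\bf 0}$ on $\mathcal{T}(A)$, so Theorem \ref{TrusToDendTrus} produces a dendriform truss structure with the same products $\succ,\prec$, and the correspondence between dendriform trusses and dendriform rings established above then yields the statement.
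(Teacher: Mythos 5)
Your proof is correct and is exactly the direct verification the paper has in mind: the paper offers no written proof, merely labelling the proposition ``straightforward,'' and your check of the absorption, additivity, and three dendriform identities (with the Rota-Baxter identity \eqref{RBRing} entering only in the first and last) is the intended argument. The alternative route you sketch via $\mathcal{T}(A)$ and Theorem \ref{TrusToDendTrus} is also sound and consistent with the surrounding results of the paper.
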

\begin{prop}
  Let   $(A,+,\times)$ be a ring and $R: A \to A $ be a Rota-Baxter operator, then $R$  is  a Rota-Baxter operator on the associated truss $T(A)$. Conversely, If $R$ is a Rota-Baxter operator on a truss $T$, then it is still a Rota-Baxter operator on its associated ring. 
\end{prop}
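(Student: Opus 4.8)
The plan is to unwind both directions by translating the Rota-Baxter identity in one setting into the Rota-Baxter identity in the other, using only the fact that the two structures share the \emph{same} multiplication and that their additive/heap structures are related by the dictionary from Remark~\ref{TrussesRings}, namely $[x,y,z] = x - y + z$ and $x + y = [x,\mathbf{0},y]$. So the only real content is that ``heap morphism fixing $\mathbf 0$'' matches ``group homomorphism'' on the two sides, and that the weight-$\mathbf 0$ truss identity \eqref{RBCon} is literally the ring identity \eqref{RBRing} after rewriting the bracket as $-$, $+$.

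First I would handle the direction ring $\Rightarrow$ truss. Suppose $(A,+,\times)$ is a ring and $R$ a Rota-Baxter operator on it. Recall $T(A) = \mathcal T(A)$ has heap bracket $[x,y,z] = x - y + z$, multiplication $\times$, and absorber element $\mathbf 0 = 0$ (it is an absorber for $\times$ precisely because $0$ is the additive zero of the ring and $\times$ is $\Z$-bilinear). Since $R$ is a group homomorphism of $(A,+)$, it is automatically a homomorphism of the associated heap $\mathcal H(A,+)$, and $R(0) = 0$, so $R$ is a heap morphism with $R(\mathbf 0) = \mathbf 0$. It remains to check \eqref{RBCon}: for all $x,y\in A$,
\[
R[R(x)\times y,\mathbf 0, x\times R(y)] = R\big(R(x)\times y - 0 + x\times R(y)\big) = R\big(R(x)\times y + x\times R(y)\big) = R(x)\times R(y),
\]
the last equality being exactly \eqref{RBRing}. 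Hence $R$ is a Rota-Baxter operator of weight $\mathbf 0$ on $T(A)$.

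Conversely, suppose $R$ is a Rota-Baxter operator of weight $\mathbf 0$ on a truss $T$ that admits an absorber $\mathbf 0$; its associated ring is $\mathcal R(T;\mathbf 0) = (\mathcal G(T;\mathbf 0),\cdot)$ with addition $x + y = [x,\mathbf 0,y]$, additive identity $\mathbf 0$, and multiplication unchanged (Remark~\ref{TrussesRings}). A heap morphism $R$ with $R(\mathbf 0)=\mathbf 0$ restricts to a homomorphism of the retract group $\mathcal G(T;\mathbf 0)$, since $R(x+y) = R[x,\mathbf 0,y] = [R(x),R(\mathbf 0),R(y)] = [R(x),\mathbf 0,R(y)] = R(x) + R(y)$; so $R$ is a group homomorphism of $(\mathcal R(T;\mathbf 0),+)$. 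For the multiplicative identity, rewrite \eqref{RBCon} using $[a,\mathbf 0,b] = a - \mathbf 0 + b = a + b$ in $\mathcal G(T;\mathbf 0)$:
\[
R(x)\cdot R(y) = R[R(x)\cdot y,\mathbf 0, x\cdot R(y)] = R\big(R(x)\cdot y + x\cdot R(y)\big),
\]
which is precisely \eqref{RBRing} for $\mathcal R(T;\mathbf 0)$. Thus $R$ is a Rota-Baxter operator on $\mathcal R(T;\mathbf 0)$, and since $T = \mathcal T(\mathcal R(T;\mathbf 0))$, the two notions agree.

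I expect no serious obstacle here; the statement is essentially a bookkeeping consequence of the functorial correspondence in Remark~\ref{TrussesRings}. The only point deserving a sentence of care is the claim that $\mathbf 0$, the absorber element used as the weight, coincides with the additive neutral element of the associated ring and that a heap morphism fixing $\mathbf 0$ is the same thing as a group homomorphism of the retract — both of which are immediate from the definitions of $+_e$ and $\mathcal G(T;\mathbf 0)$.
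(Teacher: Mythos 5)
Your proof is correct and follows essentially the same route as the paper's: both directions are handled by translating $[a,\mathbf 0,b]$ into $a+b$ via the correspondence of Remark~\ref{TrussesRings} and observing that a heap morphism fixing $\mathbf 0$ is exactly a group homomorphism of the retract. You are in fact slightly more careful than the paper in explicitly verifying $R(\mathbf 0)=\mathbf 0$ and the absorber/zero identification, but the substance is identical.
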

\begin{proof}
    Let   $(A,+,\times)$ be a ring and $R$ be a Rota-Baxter operator. Since $R$ is a group homomorphism, then it is also a  morphism for the associated heap. On the other hand,   
    let $x,y\in A$. Then 
\begin{align*}
    R([R(x)\times y,0,x\times R(y)])&=R(R(x)\times y-0+x\times R(y))\\
    &=R(x)\times  R(y).
\end{align*}
  Conversely,   let $R$ be a Rota-Baxter operator on a truss $(T,[\cdot,\cdot,\cdot],\cdot)$ and denote by $(A,+,\cdot)$ 
the associated ring. 
It is clear that $R$ is a group morphism for $(A,+)$. In addition, for all $x,y \in A$, we have   
\begin{align*}
  R(R(x)\cdot y+x\cdot R(y)) 
  &= R([R(x)\cdot y,{\bf0},x\cdot R(y)])\\
  &= R(x)\cdot R(y).
\end{align*}
\end{proof} 

\begin{coro}
 Let $(A,+,\succ,\prec)$ be a dendriform ring and define $$[x,y,z]=x-y+z,\ \forall x,y,z \in A.$$   
Then $(T,[\cdot,\cdot,\cdot],\succ,\prec)$ is a dendriform truss. 
\end{coro}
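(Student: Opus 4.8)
The plan is to unwind the definition of the subadjacent structures on both sides and see that the dendriform-ring axioms for $(A,+,\succ,\prec)$ are literally the dendriform-truss axioms for $(A,[\cdot,\cdot,\cdot],\succ,\prec,0)$ once one substitutes $[x,y,z]=x-y+z$ and $x+y=[x,0,y]$. First I would observe that, given the dendriform ring $(A,+,\succ,\prec)$, the ternary operation $[x,y,z]=x-y+z$ makes $(A,[\cdot,\cdot,\cdot])$ an abelian heap — this is precisely the functor $\mathcal H$ of the Preliminaries (or the observation already used in the proof of the previous theorem), so no work is needed there. It remains to check the five axioms \eqref{dend1}--\eqref{dend5}.

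Axiom \eqref{dend1} (absorption by $0$) follows directly from the two dendriform-ring identities $x\succ 0=0\prec x=0$ together with the missing ones $0\succ x=x\prec 0=0$, which are consequences of the additive identities: e.g. $0\succ x=(0+0)\succ x=0\succ x+0\succ x$ forces $0\succ x=0$ in the abelian group $(A,+)$, and symmetrically for $x\prec 0$. Axiom \eqref{dend2} (distributivity of $\succ,\prec$ over the heap bracket) is obtained by expanding $[x,y,z]\succ t=(x-y+z)\succ t$ and using additive biadditivity of $\succ$ in the first slot: $(x-y+z)\succ t=x\succ t-y\succ t+z\succ t=[x\succ t,y\succ t,z\succ t]$, and dually for the $\prec$-identity. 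For \eqref{dend3}, \eqref{dend4}, \eqref{dend5} I would note that the only subtlety is that the truss axioms involve the bracket $[x\succ y,{\bf 0},x\prec y]$, which under our definitions equals $x\succ y-0+x\prec y=x\succ y+x\prec y$; with that translation \eqref{dend3} becomes $(x\succ y+x\prec y)\succ z=x\succ(y\succ z)$, \eqref{dend4} becomes $(x\succ y)\prec z=x\succ(y\prec z)$, and \eqref{dend5} becomes $(x\prec y)\prec z=x\prec(y\succ z+y\prec z)$ — exactly the three remaining dendriform-ring axioms.

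For the converse direction, starting from a dendriform truss $(T,[\cdot,\cdot,\cdot],\prec,\succ,{\bf 0})$ I would set $x+y=[x,{\bf 0},y]$ and invoke the Retract of a Heap construction to get that $(T,+)$ is an abelian group with neutral element ${\bf 0}$ and $-x=[{\bf 0},x,{\bf 0}]$; in this group one checks $[x,y,z]=x-y+z$. Then the same five-axiom translation, read backwards, turns \eqref{dend1}--\eqref{dend5} into the dendriform-ring axioms for $(T,+,\succ,\prec)$. Finally I would remark that these two assignments are mutually inverse: applying $\mathcal H$ then the retract at ${\bf 0}$ returns the original group, and conversely, which is the standard heap/group adjunction already recorded in Section~\ref{Sec2}; hence the correspondence is bijective and compatible with morphisms.

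The only place requiring genuine care — rather than mechanical substitution — is establishing the ``extra'' absorption identities $0\succ x=0=x\prec 0$ in the ring-to-truss direction (not assumed in the definition of a dendriform ring but needed for \eqref{dend1}), and dually checking that a dendriform truss forces $x\succ{\bf 0}={\bf 0}$ and ${\bf 0}\prec x={\bf 0}$ to be consistent with the group structure; both are one-line arguments using biadditivity/distributivity and cancellation, so I expect no real obstacle, only bookkeeping.
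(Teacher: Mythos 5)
Your proof is correct and follows essentially the same route as the paper, which obtains this corollary from the preceding one-to-one correspondence theorem by exactly the axiom-by-axiom translation under $[x,y,z]=x-y+z$ and $x+y=[x,{\bf 0},y]$. The paper leaves the verification at ``we can check''; your derivation of the extra absorption identities $0\succ x=0=x\prec 0$ from the one-sided additivity axioms supplies the only detail that is not purely mechanical.
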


\begin{coro}
 Let $(T,[\cdot,\cdot,\cdot],\succ,\prec)$ be a dendriform truss. Suppose that there is an absorber  element $a \in T$ and define the operation
 $$x+y=[x,a,y],\ \forall x,y, \in A.$$
 Then $(A,+,\succ,\prec)$ is a dendriform ring.
\end{coro}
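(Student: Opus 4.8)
The plan is to verify the dendriform-ring axioms for $(A,+,\succ,\prec)$ by translating each heap identity of the dendriform truss into the corresponding additive identity, using that $+$ is defined by $x+y=[x,a,y]$ with $a$ the absorber element. The key preliminary observation is that, by the Definition-Proposition on retracts of a heap, $(A,+)=\mathcal{G}(T;a)$ is an abelian group with neutral element $a$ and $-x=[a,x,a]$; moreover the bracket is recovered as $[x,y,z]=x-y+z$. This is the same computation already used in the proof that dendriform rings and dendriform trusses correspond, specialized to the absorber $a$ in place of ${\bf 0}$, so the bulk of the argument is a direct transcription.

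First I would record that $x\succ a=a\succ x=a\prec x=x\prec a=a$, which is exactly \eqref{dend1} with ${\bf 0}$ replaced by $a$; this gives the axiom $x\succ 0=0\prec x=0$ for the dendriform ring once we identify $0$ with $a$. Next I would translate \eqref{dend2}: since $[x,y,z]=x-y+z$ in $(A,+)$, the identity $[x,y,z]\succ t=[x\succ t,y\succ t,z\succ t]$ becomes $(x-y+z)\succ t=x\succ t-y\succ t+z\succ t$; setting $z=y$ and using $y\succ t-y\succ t=a$ (the additive zero) collapses this to biadditivity $(x+y)\succ t=x\succ t+y\succ t$, and symmetrically for $\prec$ in the second slot. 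Then the three "tridendriform-type" axioms \eqref{dend3}, \eqref{dend4}, \eqref{dend5} convert verbatim: $[x\succ y,{\bf 0},x\prec y]=x\succ y-a+x\prec y=x\succ y+x\prec y$ in $(A,+)$, so \eqref{dend3} reads $(x\succ y+x\prec y)\succ z=x\succ(y\succ z)$, \eqref{dend4} is already slot-free and reads $(x\succ y)\prec z=x\succ(y\prec z)$, and \eqref{dend5} reads $(x\prec y)\prec z=x\prec(y\succ z+y\prec z)$. These are precisely the defining relations of a dendriform ring.

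The one point requiring a little care — and the only place where the absorber hypothesis is genuinely used — is showing that $(A,+)$ is indeed an abelian group and that $a$ behaves as the additive zero with respect to $\succ$ and $\prec$: associativity and commutativity of $+$ come from the heap associativity \eqref{assoc} and commutativity \eqref{comm}, the Mal'cev identities \eqref{Malcev} give $a+x=x+a=x$ and $x+[a,x,a]=a$, and the compatibility of $a$ with the products is exactly \eqref{dend1}. I do not anticipate any real obstacle; the main thing to be careful about is consistently writing $[x,y,z]$ as $x-y+z$ inside $(A,+)$ and remembering that the additive neutral element is $a$, not some external ${\bf 0}$. I would close by noting that the construction is inverse to the Corollary immediately preceding (which builds a dendriform truss from a dendriform ring via $[x,y,z]=x-y+z$), so the two corollaries together exhibit the bijection between dendriform rings and dendriform trusses-with-absorber already recorded in the theorem above.
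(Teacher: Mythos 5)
Your approach is the same as the paper's: the corollary is just one direction of the correspondence theorem between dendriform rings and dendriform trusses, whose proof consists precisely of passing to the retract group $\mathcal{G}(T;a)$, writing $[x,y,z]=x-y+z$, and transcribing each axiom, with the verification left as routine. One small slip to fix: to extract additivity of $\succ$ in its first argument from $[x,y,z]\succ t=[x\succ t,y\succ t,z\succ t]$ you must specialize the \emph{middle} entry to the absorber, using $[x,a,z]=x+z$ and $a\succ t=a$; your substitution $z=y$ only collapses both sides to the tautology $x\succ t=x\succ t$ and yields nothing.
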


\subsection{Weighted Rota-Baxter operators}
In the sequel, we introduce the notion of weighted  Rota-Baxter operator  on a truss. Such operator splits the truss structure into three pieces which constitute what we will call a tridendriform truss.

\begin{defn}
Let $(T,[\cdot,\cdot,\cdot],\cdot)$ be a truss and $a\in T$ be an element in the center with respect to the binary operation. A heap homomorphism $R: T \to T$ is called a weighted  Rota-Baxter of weight $a$ on $T$  if it satisfies
\begin{align}
 & R(x)\cdot R(y)=R([R(x)\cdot y  ,a\cdot x \cdot y,x \cdot R(y)]
\end{align}
for all $x,y \in T$. 
\end{defn}

This definition is in the same spirit as the original definition of Rota-Baxter operators given by F. V. Atkinson in \cite{Atkinson}, see also more recent observations and developments in \cite{Goncharo}.
If the truss admits an absorber element $\mathbf{0}$, we recover for $a=\mathbf{0}$ the  Rota-Baxter operators of weight $\mathbf{0}$ studied in Section \ref{Sec3}.
If the truss is unital with $1$ as a unit element, setting $a=1$, one gets Rota-Baxter operator of weight $1$.

\begin{defn}
Let  $(T,[\cdot,\cdot,\cdot],\cdot)$ be a truss (not necessarily unital). We say that a heap homomorphism $R: T \to T$ is  a  Rota-Baxter operator of weight $1$ on $T$  if it satisfies
\begin{align}
 & R(x)\cdot R(y)=R([R(x)\cdot y  , x \cdot y,x \cdot R(y)].
\end{align}
\end{defn}
\begin{ex}\label{RB on Z2}
 Let $\mathbb{Z}_2$ be the truss with the heap operation 
$[\cdot, \cdot, \cdot]_+$ and
 multiplication  given in Example \ref{ClassifZ2} by Table $(3)$. Define the map $R:\mathbb{Z}_2\to \mathbb{Z}_2$ by $R(1)=0$ and $R(0)=1.$ Then $R$ is  a  Rota-Baxter operator of weight $1$ on $\mathbb{Z}_2.$

\end{ex}

\begin{prop}
  Let $(T,[\cdot,\cdot,\cdot],\cdot)$ be a truss, the map $R: T \to T$   is a weighted Rota-Baxter operator of weight $a$ if and only if its graph 
  $${\sf Gr}(R):=\{(R(x),x),\  x\in T\}$$
  is a sub-truss of $T\bowtie_a T.$
\end{prop}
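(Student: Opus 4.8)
The plan is to mimic the proof of the earlier graph proposition for weight-$\mathbf 0$ Rota-Baxter operators, now working inside the truss $T\bowtie_a T$ whose product is $\boxdot_a$ given in \eqref{HeapLtimes} by $(u,x)\boxdot_a(v,y)=(u\cdot v,[u\cdot y,\,a\cdot x\cdot y,\,x\cdot v])$. As a first step I would record that ${\sf Gr}(R)$ is a subheap of $T\bowtie_a T$ if and only if $R$ is a heap homomorphism; this is identical to the weight-$\mathbf 0$ case since the heap structure of $T\bowtie_a T$ coincides with that of $T\ltimes T$, namely the componentwise bracket, and $(R(x),x)$ closed under the bracket means precisely $R[x,y,z]=[R(x),R(y),R(z)]$.

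Next, assuming $R$ is a heap homomorphism, I would compute, for $x,y\in T$,
\[
(R(x),x)\boxdot_a (R(y),y)=\bigl(R(x)\cdot R(y),\,[R(x)\cdot y,\;a\cdot x\cdot y,\;x\cdot R(y)]\bigr),
\]
using the defining formula of $\boxdot_a$ with the roles: first components $R(x),R(y)$, second components $x,y$. The point is that ${\sf Gr}(R)$ is closed under $\boxdot_a$ precisely when this pair again lies in ${\sf Gr}(R)$, i.e. when its second component is the $R$-image of its first component:
\[
R(x)\cdot R(y)=R\bigl([R(x)\cdot y,\;a\cdot x\cdot y,\;x\cdot R(y)]\bigr),
\]
which is exactly the weighted Rota-Baxter identity of weight $a$. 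Conversely, if $R$ satisfies this identity and is a heap morphism, then every product of two elements of ${\sf Gr}(R)$ lies in ${\sf Gr}(R)$, so ${\sf Gr}(R)$ is a subheap closed under $\boxdot_a$, hence a subtruss. I should also note $R(\mathbf 0)=\mathbf 0$ if one wants $(\mathbf 0,\mathbf 0)$-type compatibility, but since a subtruss only needs closure under the bracket and the product, no separate basepoint condition is required here.

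The only genuine subtlety — and the step I expect to be the main obstacle to state cleanly — is the bookkeeping in the $\boxdot_a$ formula: one must be careful that the factor $a$ multiplies the \emph{second} components ($x$ and $y$, the "tail" entries) and not the $R$-images, and that $a$ lying in the center is what makes $a\cdot x\cdot y$ unambiguous and makes $T\bowtie_a T$ a truss in the first place (invoked from the preceding proposition). Once the substitution into \eqref{HeapLtimes} is written with the correct slots, the equivalence is a direct reading-off, with no heap-associativity gymnastics beyond what is already used in the weight-$\mathbf 0$ analogue. I would therefore keep the write-up to roughly the same length as that earlier proof: one line for the subheap equivalence, one displayed computation of $(R(x),x)\boxdot_a(R(y),y)$, and the observation that membership of the result in ${\sf Gr}(R)$ is the weighted identity.
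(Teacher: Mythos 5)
Your argument is correct and takes essentially the same route as the paper's proof: identify subheap closure of ${\sf Gr}(R)$ with $R$ being a heap morphism, compute $(R(x),x)\boxdot_a(R(y),y)=\bigl(R(x)\cdot R(y),[R(x)\cdot y,\,a\cdot x\cdot y,\,x\cdot R(y)]\bigr)$, and read off that closure under $\boxdot_a$ is exactly the weighted identity. (One small verbal slip: $(u,v)\in{\sf Gr}(R)$ means $u=R(v)$, i.e.\ the \emph{first} component is the $R$-image of the second, not the other way around; your displayed equation is nonetheless the correct one.)
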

\begin{proof} 
It is easy to check that $R$ is a heap homomorphism if and only if ${\sf Gr}(R)$ is a sub-heap of $T \bowtie T$.
    Let $(R(x),x),(R(y),y)\in{\sf Gr}(R)$, we have
    $$(R(x),x)\boxdot_a(R(y),y)=(R(x)\cdot R(y),[R(x)\cdot y,a\cdot x\cdot y,x\cdot R(y)]).$$
    Then, $R$   is a  weighted Rota-Baxter operator if and only if its graph 
  ${\sf Gr}(R)$  is a sub-truss of $T\bowtie_a T$.
\end{proof}
\begin{defn}
 Let $(T,[\cdot,\cdot,\cdot])$ be an abelian heap and $\succ,\vee,\prec: T \otimes T \to T$ be three binary operations which are distributive with respect to the heap structure. We say that $(T,[\cdot,\cdot,\cdot],\succ,\vee,\prec)$ is a tridendriform truss if it satisfies the following set of identities: 
 \begin{align}
&(x\prec y)\prec z =x\prec [y\succ z, y \vee z,y \prec z], \label{tri1}\\
&(x\succ y)\prec z =x\succ(y\prec z), \label{tri2}\\
&[x \succ y, x \vee y,x \prec y]\succ z =x \succ(y\succ z), \label{tri3} \\
&(x\succ y)\vee z =x\succ(y\vee z), \label{tri4} \\
&(x\prec y)\vee z =x\vee (y \succ z), \label{tri5} \\
&(x\vee y)\prec z =x\vee (y\prec z), \label{tri6} \\
&(x\vee y)\vee z =x\vee (y \vee  z ) \label{tri7},
 \end{align}
 for all $x,y,z \in T$. 
\end{defn}

\begin{prop}
Let $(T,[\cdot,\cdot,\cdot],\succ,\vee,\prec)$ be a tridendriform truss. Then $(T,[\cdot,\cdot,\cdot],\dd)$ is a truss, where
\begin{align}
    & x \dd y= [x \succ y, x \vee y,x \prec y],\quad \forall x,y \in T.
\end{align}
\end{prop}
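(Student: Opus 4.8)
The plan is to mimic exactly the computation already carried out in the proof of Theorem \ref{DendTrusToTrus} (the unweighted dendriform case), since the tridendriform axioms \eqref{tri1}--\eqref{tri7} are designed precisely so that the three-term bracket $x\dd y=[x\succ y, x\vee y, x\prec y]$ behaves like the subadjacent product. Concretely, I would first verify associativity: expand $(x\dd y)\dd z$ by substituting the definition, using the distributivity of $\succ,\vee,\prec$ over the heap bracket to bring the outer operation inside, so that $(x\dd y)\dd z$ becomes a nested heap expression in the nine terms obtained by applying one of $\succ,\vee,\prec$ on the left and one on the right. Then apply Lemma \ref{LEMMA} repeatedly to reorganise this $3\times 3$ array of terms, and rewrite using \eqref{tri1}--\eqref{tri7}: the seven identities exactly convert the "first do $x\dd y$, then $\dd z$" grouping into the "first do $y\dd z$, then $x\dd$" grouping. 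Symmetrically expand $x\dd(y\dd z)$ and observe the two nested heap expressions coincide by heap associativity \eqref{assoc} and commutativity \eqref{comm}.

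For the distributivity laws, I would argue as in Theorem \ref{DendTrusToTrus}: compute
\[
x\dd[y,z,t]=[x\succ[y,z,t],\ x\vee[y,z,t],\ x\prec[y,z,t]],
\]
use the assumed distributivity of each of $\succ,\vee,\prec$ over $[\cdot,\cdot,\cdot]$ to expand each of the three entries into a heap bracket of three terms, then apply Lemma \ref{LEMMA} to transpose the resulting $3\times 3$ array, recognising the three columns as $x\dd y$, $x\dd z$, $x\dd t$. This gives left distributivity $x\dd[y,z,t]=[x\dd y, x\dd z, x\dd t]$; right distributivity is identical with the roles of the two arguments swapped, using the "on the right" halves of the distributivity hypotheses.

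The only genuine bookkeeping obstacle is the associativity step: matching up the nine terms correctly after the Lemma \ref{LEMMA} transpositions and confirming that each of the seven tridendriform identities is used exactly once (the $\succ\succ$/$\succ\prec$/$\prec\prec$ terms handled by \eqref{tri1}--\eqref{tri3}, the mixed $\vee$ terms by \eqref{tri4}--\eqref{tri6}, and the $\vee\vee$ term by \eqref{tri7}), just as in the dendriform computation where \eqref{dend3}, \eqref{dend4}, \eqref{dend5} played the analogous roles. I expect no conceptual difficulty beyond careful tracking of brackets; since $[{\bf 0},{\bf 0},{\bf 0}]$ plays no role here (there is no absorber assumption in this statement), the computation is in fact a clean heap-theoretic identity chase. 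I would present associativity in a displayed \texttt{align*} block of five or six lines exactly paralleling the proof of Theorem \ref{DendTrusToTrus}, then dispatch distributivity in two or three lines, and note that the right-hand versions follow by symmetry.
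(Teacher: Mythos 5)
Your proposal is correct and follows essentially the same route as the paper: the paper's proof is exactly the heap-identity chase you describe, expanding $x \dd (y \dd z)$ and $(x \dd y)\dd z$ via distributivity, rearranging the resulting nested brackets with the heap axioms, and consuming each of \eqref{tri1}--\eqref{tri7} exactly once (with \eqref{tri1} and \eqref{tri3} absorbing whole three-term blocks rather than single summands), while the distributivity of $\dd$ is dismissed as straightforward just as you dispatch it. Your accounting of which identity handles which of the nine cross terms is accurate, so there is no gap.
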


\begin{proof}
Let $x,y,z \in T$. We have
{\small\begin{align*}
& x \dd (y \dd z)= x \dd [y \succ z,y \vee z,y \prec z] \\
=& [x \succ [y \succ z,y \vee z,y \prec z], x \vee [y \succ z,y \vee z,y \prec z], x \prec [y \succ z,y \vee z,y \prec z]   ] \\
=&[[x \succ (y \succ z),x \succ (y \vee z),x \succ (y \prec z)], [x \vee (y \succ z),x \vee (y \vee z),x \vee(y \prec z)], (x \prec y)\prec z ] \\
=&[ [[x\succ y,x\vee y,x \prec y]\prec z, (x\succ y)\vee z, (x\succ y)\prec z] , [(x\prec y)\vee z,(x\vee y)\vee z,(x\vee y)\prec z ],   (x \prec y)\prec z    ]  \\
=&[[x\succ y,x\vee y,x \prec y]\prec z, (x\succ y)\vee z,
[(x\succ y) \prec z, [(x\prec y)\vee z,(x\vee y)\vee z,(x\vee y)\prec z ],   (x \prec y)\prec z]] \\
=& [[x\succ y,x\vee y,x \prec y]\succ  z, (x\succ y)\vee z,
[[(x\succ y) \prec z, (x\prec y)\vee z,(x\vee y)\vee z],(x\vee y)\prec z ],   (x \prec y)\prec z] \\ 
=&  [[x\succ y,x\vee y,x \prec y]\succ  z, (x\succ y)\vee z,
[[  (x\vee y)\vee z, (x\prec y)\vee z, (x\succ y) \prec z ],  (x\vee y)\prec z ],   (x \prec y)\prec z] \\ 
=&  [[x\succ y,x\vee y,x \prec y]\succ  z, (x\succ y)\vee z,
[  (x\vee y)\vee z, (x\prec y)\vee z, [(x\succ y) \prec z ,  (x\vee y)\prec z ,   (x \prec y)\prec z]  ] \\
=& [[x\succ y,x\vee y,x \prec y]\succ z , [x\succ y,x\vee y,x \prec y]\vee z, [x\succ y,x\vee y,x \prec y]\prec z  ] \\
=& (x \dd y) \dd z. 
\end{align*}}
The distributivity of $\dd$ with respect to the heap structure is straightforward. 
\end{proof}
    
\begin{thm}\label{truss to tritruss}
      Let $(T,[\cdot,\cdot,\cdot],\cdot)$ be a truss and $R: T \to T$ be a weighted Rota-Baxter operator. Define the products 
  \begin{align}
      & x \succ y =R(x)\cdot y,  \quad  x \prec y=x\cdot R(y),\quad x\vee y=a\cdot x\cdot y,\ \forall x,y \in T.
  \end{align}
  Then  $(T,[\cdot,\cdot,\cdot],\succ,\vee,\prec)$ is  a tridendriform truss. In addition, if $(T,[\cdot,\cdot,\cdot],\dd)$ is the subadjacent truss, $R$ is a truss homomorphism from $(T,[\cdot,\cdot,\cdot],\dd)$ to $(T,[\cdot,\cdot,\cdot],\cdot)$. 
\end{thm}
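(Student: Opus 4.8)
The plan is to verify the seven tridendriform-truss identities \eqref{tri1}--\eqref{tri7} directly from the definitions $x\succ y=R(x)\cdot y$, $x\prec y=x\cdot R(y)$, $x\vee y=a\cdot x\cdot y$, using only associativity of $\cdot$, the distributivity of $\cdot$ over the heap bracket, centrality of $a$, the heap axioms (especially Lemma \ref{LEMMA} and $[{\bf 0},{\bf 0},{\bf 0}]={\bf 0}$), and the single nontrivial input, namely the weighted Rota-Baxter identity $R(x)\cdot R(y)=R([R(x)\cdot y,\,a\cdot x\cdot y,\,x\cdot R(y)])$. The first observation is that, since $R$ is a heap homomorphism, each of $\succ,\vee,\prec$ is distributive with respect to the heap bracket on both sides: for $\succ$ one uses $[x,y,z]\succ t=[R(x),R(y),R(z)]\cdot t=[R(x)\cdot t,R(y)\cdot t,R(z)\cdot t]$ and $x\succ[y,z,t]=R(x)\cdot[y,z,t]=[R(x)\cdot y,R(x)\cdot z,R(x)\cdot t]$, and similarly for $\prec$ and $\vee$ (centrality of $a$ is irrelevant here but harmless). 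This discharges the standing distributivity hypothesis in the definition of a tridendriform truss.

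Next I would dispatch the four identities \eqref{tri2}, \eqref{tri4}, \eqref{tri5}, \eqref{tri6} that need only associativity (and centrality of $a$), not the Rota-Baxter property. For \eqref{tri2}: $(x\succ y)\prec z=(R(x)\cdot y)\cdot R(z)=R(x)\cdot(y\cdot R(z))=x\succ(y\prec z)$. For \eqref{tri4}: $(x\succ y)\vee z=a\cdot(R(x)\cdot y)\cdot z=R(x)\cdot(a\cdot y\cdot z)=x\succ(y\vee z)$, using centrality of $a$ to move it past $R(x)$. For \eqref{tri5}: $(x\prec y)\vee z=a\cdot(x\cdot R(y))\cdot z=a\cdot x\cdot(R(y)\cdot z)=x\vee(y\succ z)$. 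For \eqref{tri6}: $(x\vee y)\prec z=(a\cdot x\cdot y)\cdot R(z)=a\cdot x\cdot(y\cdot R(z))=x\vee(y\prec z)$. Identity \eqref{tri7} is just $a\cdot(a\cdot x\cdot y)\cdot z=a\cdot x\cdot(a\cdot y\cdot z)$, again by associativity and centrality.

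The two genuinely interesting identities are \eqref{tri1} and \eqref{tri3}, and these are where the weighted Rota-Baxter condition enters. For \eqref{tri3}, the left side is $[x\succ y,x\vee y,x\prec y]\succ z=[R(x)\cdot y,\,a\cdot x\cdot y,\,x\cdot R(y)]\succ z=R([R(x)\cdot y,\,a\cdot x\cdot y,\,x\cdot R(y)])\cdot z$, which by the weighted Rota-Baxter identity equals $(R(x)\cdot R(y))\cdot z=R(x)\cdot(R(y)\cdot z)=x\succ(y\succ z)$. Identity \eqref{tri1} is the mirror image: $(x\prec y)\prec z=(x\cdot R(y))\cdot R(z)=x\cdot(R(y)\cdot R(z))=x\cdot R([R(y)\cdot z,\,a\cdot y\cdot z,\,y\cdot R(z)])=x\prec[y\succ z,\,y\vee z,\,y\prec z]$, applying the Rota-Baxter identity to the pair $(y,z)$. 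I expect \eqref{tri1} and \eqref{tri3} to be the main (and only real) obstacle, though in fact they are immediate once one recognizes that the bracket appearing in the Rota-Baxter identity is exactly $[u\succ v,u\vee v,u\prec v]$; no heap-bracket juggling is needed for them, unlike in the associativity proof of the subadjacent truss.

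Finally, for the addendum: $R$ being a truss homomorphism from $(T,[\cdot,\cdot,\cdot],\dd)$ to $(T,[\cdot,\cdot,\cdot],\cdot)$ means $R$ is a heap morphism (given) and $R(x\dd y)=R(x)\cdot R(y)$. Since by the previous proposition $x\dd y=[x\succ y,x\vee y,x\prec y]=[R(x)\cdot y,\,a\cdot x\cdot y,\,x\cdot R(y)]$, we get $R(x\dd y)=R([R(x)\cdot y,\,a\cdot x\cdot y,\,x\cdot R(y)])=R(x)\cdot R(y)$ by the defining identity of a weighted Rota-Baxter operator. This completes the plan.
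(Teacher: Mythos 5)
Your proposal is correct and follows essentially the same approach as the paper: direct verification of the tridendriform identities using associativity of $\cdot$, centrality of $a$, distributivity, and the weighted Rota-Baxter identity applied exactly where the bracket $[u\succ v,u\vee v,u\prec v]$ appears. In fact you are more thorough than the paper, which only writes out \eqref{tri1} and \eqref{tri4} and leaves the remaining identities, the distributivity check, and the homomorphism addendum to the reader.
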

\begin{ex} 
Consider the truss structure on $\mathbb Z_2$ given by table $(3)$ in Example \ref{ClassifZ2} and the Rota-Baxter operator given in Example \ref{RB on Z2}. Then, according to Theorem \ref{truss to tritruss}, the binary operations given by 
    $$ \begin{tabular}{|l|l|l|}
     \hline
     $\vee$  & $0$ & $1$ \\
       \hline 
       $0$ & $0$   &$0$\\
      \hline 
      $1$ & $0$& $1$\\
      \hline
    \end{tabular},\quad \quad \begin{tabular}{|c|l|l|}
     \hline
     $\succ$  & $0$ & $1$ \\
       \hline 
       $0$ & $0$   &$1$\\
      \hline 
      $1$ & $0$& $0$\\
      \hline
    \end{tabular} \quad 
    \text{ and }\quad 
    \begin{tabular}{|c|l|l|}
     \hline
     $\prec$  & $0$ & $1$ \\
       \hline 
       $0$ & $0$   &$0$\\
      \hline 
      $1$ & $1$& $0$\\
      \hline
    \end{tabular} 
    $$
endow  the abelian heap $\mathbb Z_2$ with a tridendriform truss structure. 
\end{ex}

\begin{proof}
 Let $x,y,z \in T$.  We will just prove identities \eqref{tri1}    and \eqref{tri4} and the others can be shown by similar computations. We have 
 \begin{align*}
   x \prec [y \succ z,y \vee z,y \prec z]=& 
   x \prec [R(y)\cdot z,a\cdot y \cdot z,y\cdot R(z)] \\
   =& x \cdot R ([R(y)\cdot z,a \cdot y\cdot z,y\cdot R(z)] ) \\
   =& x \cdot (R(y) \cdot R(z)) \\
   =& (x \cdot R(y)) \cdot R(z) \\
   =& (x \prec y) \prec z 
 \end{align*}
 and
\begin{align*}
&(x\succ y)\vee z =a\cdot(R(x)\cdot y)\cdot z= R(x) \cdot(a\cdot y\cdot z) = x\succ(y\vee z).  \end{align*}
  
Then Equations \eqref{tri1}        and \eqref{tri4} holds. 
 \end{proof}
Recall that, a tridendriform ring is a quardruple $ (A,\prec ,\succ ,\vee) $  consisting of three  multiplications  maps $\succ ,\prec ,\vee : A \times A \to A$, which satisfy the following axioms:
	\begin{equation}
	\begin{aligned}&(a\prec b)\prec c =a\prec (b\ast c),\\&(a\succ b)\prec c=a\succ (b\prec c),\\&(a\ast b)\succ c =a\succ (b\succ c),\\&(a\succ b)\vee c=a\succ (b\vee c),\\&(a\prec b)\vee c=a \vee (b\succ c),\\&(a \vee b)\prec c =a\vee(b\prec  c),\\&(a\vee b)\vee c=a\vee(b\vee c),\end{aligned}
	\end{equation} where $\ast=\succ-\vee+\prec$, for $a,b,c \in A$.
\begin{coro}
 Let $(A,+,\succ,\vee,\prec)$ be a tridendriform ring and define $$[x,y,z]=x-y+z,\ \forall x,y,z \in A.$$   
Then $(T,[\cdot,\cdot,\cdot],\succ,\vee,\prec)$ is a tridendriform truss. 
\end{coro}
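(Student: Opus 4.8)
The plan is to verify directly that, starting from a tridendriform ring $(A,+,\succ,\vee,\prec)$, the ternary operation $[x,y,z]=x-y+z$ together with the three given products $\succ,\vee,\prec$ satisfies the axioms of a tridendriform truss, namely \eqref{tri1}--\eqref{tri7}. First I would record that $(A,[\cdot,\cdot,\cdot])$ is an abelian heap: this is exactly the functor $\mathcal H$ applied to the underlying abelian group $(A,+)$, and the heap axioms \eqref{assoc}--\eqref{comm} are immediate from the group axioms. Next I would check that each of $\succ,\vee,\prec$ is distributive with respect to the heap bracket on both sides; this follows from biadditivity of these products in a ring together with the formula $[x,y,z]=x-y+z$, since e.g. $x\succ[y,z,t]=x\succ(y-z+t)=x\succ y-x\succ z+x\succ t=[x\succ y,x\succ z,x\succ t]$, and symmetrically on the right.

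The heart of the argument is translating the seven tridendriform-ring axioms into the seven tridendriform-truss axioms. The key observation is that the element $\ast=\succ-\vee+\prec$ appearing in the ring axioms is precisely $a\ast b=[a\succ b,a\vee b,a\prec b]$ under the dictionary $[x,y,z]=x-y+z$. With this identification, each ring axiom becomes the corresponding truss axiom verbatim: the ring identity $(a\prec b)\prec c=a\prec(b\ast c)$ becomes $(x\prec y)\prec z=x\prec[y\succ z,y\vee z,y\prec z]$, which is \eqref{tri1}; $(a\ast b)\succ c=a\succ(b\succ c)$ becomes \eqref{tri3}; and \eqref{tri2}, \eqref{tri4}, \eqref{tri5}, \eqref{tri6}, \eqref{tri7} coincide with the remaining five ring axioms (which do not involve $\ast$ at all, so the translation is trivial there). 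Thus no genuine computation is required beyond substituting the definition of the bracket.

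I expect the only mildly delicate point — and the thing to state carefully rather than a real obstacle — to be the bookkeeping that $[a\succ b,a\vee b,a\prec b]=a\succ b-a\vee b+a\prec b=a\ast b$, so that axioms \eqref{tri1} and \eqref{tri3} really do match their ring counterparts; once this is in place, everything else is a direct rewriting. (One could alternatively invoke the already-established one-to-one correspondence between dendriform rings and dendriform trusses as a template and the fact that $0\in A$ is an absorber element for all three products, but since the statement asks only for the construction in one direction, the direct check above is the most economical route.) I would close by remarking that $0$ is an absorber element of the resulting tridendriform truss, so that the construction is compatible, via the earlier functors $\mathcal T$ and $\mathcal R$, with passing between rings and trusses.
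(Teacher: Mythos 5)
Your proof is correct and is exactly the intended argument: the paper states this corollary without proof, and the content is precisely the dictionary $a\ast b=a\succ b-a\vee b+a\prec b=[a\succ b,a\vee b,a\prec b]$, under which the seven tridendriform-ring axioms become \eqref{tri1}--\eqref{tri7} verbatim, with the heap structure and distributivity coming from $\mathcal H(A,+)$ and biadditivity. No discrepancy with the paper.
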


\begin{coro}Let 
 $(T,[\cdot,\cdot,\cdot],\succ,\vee,\prec)$ be a tridendriform truss. Suppose that there is an absorber  element $a \in T$ and define the operation
 $$x+y=[x,a,y],\ \forall x,y, \in A.$$
 Then $(A,+,\succ,\vee,\prec)$ is a tridendriform ring.
\end{coro}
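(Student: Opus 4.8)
The statement to prove is the last Corollary: starting from a tridendriform truss $(T,[\cdot,\cdot,\cdot],\succ,\vee,\prec)$ with an absorber element $a$, the operation $x+y=[x,a,y]$ together with $\succ,\vee,\prec$ yields a tridendriform ring. The plan is to exhibit the relevant structure in two stages and then translate identities. First I would invoke the Definition-Proposition on the retract of a heap: with $a$ as chosen element, $(T,+_a)$ with $+_a=[-,a,-]$ is an abelian group, with neutral element $a$ and inverse $-_a x=[a,x,a]$. So $(T,+)$ with $+=+_a$ is an abelian group, giving the additive part of the prospective tridendriform ring.

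Next I would check the interaction of each of the three products with $+$, i.e. bilinearity and the absorber axioms. Since $a$ is an absorber, $x\succ a=a\succ x=a$, and similarly for $\vee$ and $\prec$; rewriting $a$ as the additive zero $0$ of $(T,+)$, these become $x\succ 0=0\succ x=0$, etc. For bilinearity: the tridendriform axioms assume $\succ,\vee,\prec$ are distributive over the heap bracket $[\cdot,\cdot,\cdot]$, so e.g. $[x,a,y]\succ z=[x\succ z,a\succ z,y\succ z]=[x\succ z,a,y\succ z]=(x\succ z)+(y\succ z)$, using the absorber property to kill the middle term. The same computation on the right slot and for $\vee$ and $\prec$ shows each operation is additive in each variable. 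This is the routine part.

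The real content is matching the seven tridendriform-ring axioms to the seven tridendriform-truss axioms \eqref{tri1}--\eqref{tri7}, and in particular checking that the combined product $\ast=\succ-\vee+\prec$ appearing in the ring axioms corresponds to the bracketed product $[\,\cdot\succ\cdot,\,\cdot\vee\cdot,\,\cdot\prec\cdot\,]$ appearing in the truss axioms. The key observation is that in the group $(T,+_a)$ one has $[u,v,w]=u-v+w$, hence $[y\succ z,y\vee z,y\prec z]=(y\succ z)-(y\vee z)+(y\prec z)=y\mathbin{\ast}z$. Substituting this equality into \eqref{tri1}--\eqref{tri7} turns each truss identity verbatim into the corresponding ring identity; for instance \eqref{tri1} becomes $(x\prec y)\prec z=x\prec(y\ast z)$, \eqref{tri3} becomes $(x\ast y)\succ z=x\succ(y\succ z)$, and so on. The main obstacle — really the only place one must be careful — is ensuring the identification $[u,v,w]=u-_a v+_a w$ is legitimate and used consistently, i.e. that the bracket on $T$ really is the heap of the group $\mathcal G(T;a)$; this is exactly the content of the Definition-Proposition on retracts combined with the converse construction $\mathcal H(G)$, so it requires only citing those. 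After that, all seven axioms follow by direct substitution, and one remarks that the subadjacent ring multiplication $x\times y=x\succ y+x\vee y+\dots$ — more precisely $x\times y=[x\succ y,x\vee y,x\prec y]$ under the identification — coincides with the subadjacent truss product $\dd$, so the construction is compatible with passing to subadjacent structures. I would close by noting this Corollary is the exact mirror of the preceding one (dendriform ring $\Rightarrow$ dendriform truss via $[x,y,z]=x-y+z$), just run in the opposite direction.
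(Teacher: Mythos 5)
Your proposal is correct and follows exactly the route the paper intends (the paper states this corollary without proof, as the retract-direction mirror of the preceding one): pass to the retract group $\mathcal G(T;a)$, use the absorber property of $a$ for $\succ,\vee,\prec$ together with heap-distributivity to get biadditivity, and translate $[u,v,w]=u-_av+_aw$ so that $[y\succ z,y\vee z,y\prec z]$ becomes $y\ast z$, turning \eqref{tri1}--\eqref{tri7} verbatim into the seven tridendriform-ring axioms. Your explicit care about the identification $[u,v,w]=u-v+w$ and about reading ``absorber'' as absorbing for all three operations is exactly the right level of detail for the verification the paper leaves to the reader.
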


\section{Reynolds  and Nijenhuis operators on trusses and NS-trusses}\label{Sec4}
In this section, we introduce the notion of Reynolds operators on trusses. These operators allow us to split a truss structure into three parts and get another generalization of trusses called NS-trusses. All properties proved for Rota-Baxter operators in the previous section hold for Reynolds operators.

\subsection{NS-rings and NS-trusses}

In the  following,
we introduce the notion of NS-rings and NS-trusses. 

\begin{defn}
  An NS-ring is an abelian group $(A,+)$ together  with three binary operations $\succ, \prec, \ns: A \otimes A \to A$ that are 
  distributive with respect to addition and  satisfying the following set of identities:
   \begin{align}
   & (x \prec y)\prec z=x\prec (y \bullet z),\\
   & (x \succ y) \prec z=x \succ (y \prec z),\\
   & (x \bullet y) \succ z= x \succ (y \succ z),\\
   & ( x \ns y) \prec z + (x \bullet y) \ns z= x \succ (y \ns z)+x \ns (y \bullet z) ,
   \end{align}
   for all $x,y,z \in A$ and  where $x \bullet y= x \succ y+x \prec z + x \ns y$.  We will denote such a dendriform ring by $(A,\prec,\ns,\prec)$. 
\end{defn}
\begin{rmk}\label{NSRingToRing}
Exactly as for associative algebras, we can check that $(A,+,\bullet)$ is  also a ring. Moreover, if the binary operation $\ns$ is trivial, then the tuple $(A,+,\succ,\prec)$  is a dendriform ring provided that the absorption axiom \eqref{dend1}  holds.    
\end{rmk}

Now, we generalize the notion of NS-rings to the truss-setting. 
\begin{defn}
 An NS-truss is an abelian heap $(T,[\cdot,\cdot,\cdot])$ together with three binary operations $\succ,\prec$ and $\ns: T \otimes T \to T$ satisfying the following axioms:
    \begin{align}
   & (x \prec y)\prec z=x\prec [y \succ z, y \ns z,y \prec z], \label{NS1}\\
  \label{NS2} & (x \succ y) \prec z=x \succ (y \prec z),\\
 \label{NS3}  & [x \succ y, x\ns y,x \prec y] \succ z= x \succ (y \succ z),\\
 \label{NS4}  & ( x \ns y) \prec z  = x \succ (y \ns z) ,\\
  \label{NS5} &   [x \succ y, x\ns y,x \prec y] \ns z= x \ns [y \succ z, y \ns z,y \prec z],
   \end{align}
  for all $x,y,z \in T$. This dendriform truss will be denoted by $(T,[\cdot,\cdot,\cdot],\succ,\ns,\prec)$. 
\end{defn}

\begin{thm}\label{NSTrusToNSRing}
   Let $(T,[\cdot,\cdot,\cdot],\succ,\ns,\prec)$ be an NS-truss. Then the binary product
   \begin{align}
     \label{dend truss to truss}
     & x \dd y=[x \succ y,x \ns y,x \prec y],\quad x,y \in T,
   \end{align}
   provides a truss structure on the abelian heap $(T,[\cdot,\cdot,\cdot])$. 
\end{thm}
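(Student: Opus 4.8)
The plan is to mimic exactly the proof of the analogous statement for tridendriform trusses (the Proposition just before Theorem \ref{truss to tritruss}), replacing the role of $\vee$ by $\ns$ and using the five NS-truss axioms \eqref{NS1}--\eqref{NS5} in place of the seven tridendriform axioms \eqref{tri1}--\eqref{tri7}. Concretely, I must check that $\dd$ defined by $x\dd y=[x\succ y,x\ns y,x\prec y]$ is associative and distributes over the heap bracket on both sides. Distributivity is immediate: since $\succ,\prec,\ns$ are each distributive with respect to $[\cdot,\cdot,\cdot]$, one expands $x\dd[y,z,t]$ componentwise into $[[x\succ y,x\succ z,x\succ t],[x\ns y,x\ns z,x\ns t],[x\prec y,x\prec z,x\prec t]]$ and then applies Lemma \ref{LEMMA} to re-bracket it as $[x\dd y,x\dd z,x\dd t]$; the right-hand version is symmetric.

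The substance is associativity. I would expand both $(x\dd y)\dd z$ and $x\dd(y\dd z)$ using the definition, obtaining in the first case $[[x\succ y,x\ns y,x\prec y]\succ z,\;[x\succ y,x\ns y,x\prec y]\ns z,\;[x\succ y,x\ns y,x\prec y]\prec z]$ and in the second case $[x\succ[y\succ z,y\ns z,y\prec z],\;x\ns[y\succ z,y\ns z,y\prec z],\;x\prec[y\succ z,y\ns z,y\prec z]]$. Axiom \eqref{NS3} identifies the first slots, axiom \eqref{NS5} identifies the second slots, and axiom \eqref{NS1} identifies the third slots — so at the outermost level the two triples already have matching first and third entries, and matching middle entries. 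Wait: that would finish it too easily, so the catch is that the ``middle slot'' of $(x\dd y)\dd z$ is $[x\succ y,x\ns y,x\prec y]\ns z$, which is a ternary expression that must be massaged; and the second-position entry in $x\dd(y\dd z)$ is $x\ns[y\succ z,y\ns z,y\prec z]$, and these are literally equal by \eqref{NS5}. So in fact associativity follows by applying \eqref{NS1}, \eqref{NS3}, \eqref{NS5} slot-by-slot, exactly as the first/last slots use \eqref{NS3} and \eqref{NS1} — no interior re-bracketing gymnastics of the tridendriform type are needed because the NS axioms are already phrased to give the three slot-identities directly. I would nonetheless write the computation as a short display chain, using distributivity of $\succ,\prec,\ns$ over the bracket to move the outer brackets inward when verifying, e.g., that $[x\succ y,x\ns y,x\prec y]\succ z$ really matches the expansion coming from the other side.

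The only genuine point requiring care — and the step I expect to be the main obstacle — is bookkeeping the nesting of the heap brackets so that the left-associated expansion and the right-associated expansion are put in a common normal form; here Lemma \ref{LEMMA} together with heap associativity \eqref{assoc} and commutativity \eqref{comm} does all the work, but one has to apply it in the right places, just as in the tridendriform proof. I would therefore structure the proof as: (1) record $x\dd y=[x\succ y,x\ns y,x\prec y]$; (2) show left distributivity via componentwise expansion and Lemma \ref{LEMMA}, note right distributivity is analogous; (3) expand $(x\dd y)\dd z$ and $x\dd(y\dd z)$, and verify the three resulting coordinates agree using \eqref{NS3}, \eqref{NS5}, \eqref{NS1} respectively (after normalizing the bracket nesting with Lemma \ref{LEMMA} and \eqref{assoc}); (4) conclude $(T,[\cdot,\cdot,\cdot],\dd)$ is a truss. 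I would also remark that this is the precise heap/truss analogue of the fact that an NS-algebra has an associative subadjacent product, and that it specializes to Theorem \ref{DendTrusToTrus} when $\ns$ is trivial.
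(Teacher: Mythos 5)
Your distributivity argument is fine and matches the paper's (componentwise expansion plus Lemma \ref{LEMMA}). The problem is the associativity step. You assert that the three coordinates of $(x\dd y)\dd z$ and $x\dd(y\dd z)$ ``already match'' via \eqref{NS3}, \eqref{NS5}, \eqref{NS1} respectively, and that ``no interior re-bracketing gymnastics \dots are needed because the NS axioms are already phrased to give the three slot-identities directly.'' Only the middle slot works that way: \eqref{NS5} really is the identity $(x\dd y)\ns z = x\ns(y\dd z)$. For the first slot, \eqref{NS3} gives $(x\dd y)\succ z = x\succ(y\succ z)$, whereas the first slot of $x\dd(y\dd z)$ is $x\succ(y\dd z)$, which by distributivity is the \emph{triple} $[x\succ(y\succ z),\,x\succ(y\ns z),\,x\succ(y\prec z)]$; dually, \eqref{NS1} gives $x\prec(y\dd z)=(x\prec y)\prec z$, while the third slot of $(x\dd y)\dd z$ expands to $[(x\succ y)\prec z,\,(x\ns y)\prec z,\,(x\prec y)\prec z]$. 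So after flattening, the left side contributes the extra terms $(x\succ y)\prec z$ and $(x\ns y)\prec z$ and the right side contributes $x\succ(y\prec z)$ and $x\succ(y\ns z)$, and no amount of heap rearrangement identifies these: you must invoke \eqref{NS2} for $(x\succ y)\prec z=x\succ(y\prec z)$ and \eqref{NS4} for $(x\ns y)\prec z=x\succ(y\ns z)$. Your plan never mentions \eqref{NS2} or \eqref{NS4}, so as written it cannot close.

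The paper's proof does exactly the re-bracketing you dismiss: it expands both sides, uses \eqref{NS3}, \eqref{NS5}, \eqref{NS1} where they apply, then uses heap associativity and commutativity to shuffle the five resulting terms into a common normal form, at which point \eqref{NS2} and \eqref{NS4} identify the two leftover cross terms. Your closing hedge about ``normalizing the bracket nesting'' points in the right direction, but it contradicts your main claim and does not supply the two missing axioms; carrying out step (3) honestly would force you to add them. With \eqref{NS2} and \eqref{NS4} inserted at that point, your outline becomes the paper's proof.
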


\begin{proof}
  Let $x,y,z,t \in T$. We will first check the associativity:
  \begin{align*}
 (x\dd y) \dd z=& [(x \dd y)\succ z,(x \dd y) \ns z,(x \dd y) \prec z] \\
 =& [x \succ (y\succ z),x \ns (y \dd z),[(x \succ y)\prec z,(x \ns y)\prec z,(x \prec y)\prec z] ]
  \end{align*}
and 
  \begin{align*}
 x\dd (y \dd z)=& [x \succ (y\dd z),x \ns( y \dd z),x \prec (y \dd z)] \\
 =& [ [x \succ(y \succ z),x \succ(y \ns z),x \succ(y \prec z)],(x \dd y) \ns z,(x \prec y) \prec z]\\
  =& [ x \succ(y \succ z),[x \succ(y \ns z),x \succ(y \prec z),(x \dd y) \ns z],(x \prec y) \prec z]\\
  =& [ x \succ(y \succ z),[(x \dd y) \ns z,(x \succ y) \prec z,x \succ(y \ns z)],(x \prec y) \prec z]\\
  =& [ x \succ(y \succ z),(x \dd y) \ns z,[(x \succ y) \prec z,x \succ(y \ns z),(x \prec y) \prec z]] .
  \end{align*}
  Then $(x \dd y)\dd z=x \dd (y \dd z)$.  The distributivity of $\dd$ with respect to $[\cdot,\cdot,\cdot]$ is immediate. 
 \end{proof}
\subsection{Reynolds operators on trusses}
The goal of this subsection is to introduce the notion of Reynolds operator on trusses. Such operators allow us to split a truss multiplication into three compatible parts which give an NS-truss structure.
\begin{defn} \label{ReynoldDef}
    Let $(T,[\cdot,\cdot,\cdot],\cdot)$ be a truss.  A map $P:T\to T$ is called a Reynolds operator if it is a heap morphism of  $(T,[\cdot,\cdot,\cdot])$ and satisfying
    \begin{equation}
        \label{ReynoldCon}
        P(x)\cdot P(y)=P[P(x)\cdot y,P(x)\cdot P(y),x\cdot P(y)],\quad \forall x,y\in T.
    \end{equation}
\end{defn}

\begin{ex}
In \cite{TB1}, the author gave a definition of a derivation on a truss as follows.  We call it a modified derivation in this paper. A modified derivation on a truss $(T,[\cdot,\cdot,\cdot],\cdot)$ is a heap morphism $D: T \to T$ which satisfies the following Leibniz-like rule:
\begin{align}
  D(x\cdot y)=[x\cdot D(y), x\cdot y, D(x) \cdot y], \forall x,y \in T.   
\end{align}
Then we have the following equivalence:
$D$ is a modified derivation if and only if $D^{-1}$ is a Reynolds operator. 
\end{ex}

\begin{thm}
  Let $(T,[\cdot,\cdot,\cdot],\cdot)$ be a truss and $P: T \to T$ be a Reynolds operator. Define the product 
  \begin{align}
      & x \circ y =[P(x)\cdot y,P(x)\cdot P(y),x\cdot P(y)].
  \end{align}
  Then $(T,[\cdot,\cdot,\cdot],\circ)$ is also a truss.
\end{thm}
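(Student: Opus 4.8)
The statement is the exact Reynolds-operator analogue of the Rota–Baxter theorem proved earlier (where $x\diamond y = [R(x)\cdot y,\mathbf 0,x\cdot R(y)]$), so the strategy is to mimic that proof, carrying the extra middle term $P(x)\cdot P(y)$ through the computation. First I would verify distributivity of $\circ$ over the heap bracket, since that is the easy half: fixing $x$ and expanding $x\circ[y,z,t]$ using left-distributivity of $\cdot$ in each of the three slots gives $[[P(x)y,P(x)z,P(x)t],[P(x)P(y),P(x)P(z),P(x)P(t)],[xP(y),xP(z),xP(t)]]$ (here $P[y,z,t]=[P(y),P(z),P(t)]$ since $P$ is a heap morphism), and then a single application of Lemma~\ref{LEMMA} reshuffles this into $[x\circ y,x\circ z,x\circ t]$. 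Right-distributivity is symmetric.

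**Associativity — the main obstacle.** The real work is associativity of $\circ$. I would compute both $(x\circ y)\circ z$ and $x\circ(y\circ z)$ by substituting the definition of $\circ$ into the outer occurrence, using left/right distributivity of $\cdot$ over $[\cdot,\cdot,\cdot]$ together with the heap morphism property $P[\,\cdot\,,\cdot\,,\cdot\,]=[P(\cdot),P(\cdot),P(\cdot)]$, and crucially the Reynolds identity \eqref{ReynoldCon} to collapse every subterm of the form $P(a)\cdot P(b)$ that arises from $P$ hitting a bracket whose entries are themselves the three pieces $P(u)\cdot v,\ P(u)\cdot P(v),\ u\cdot P(v)$. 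Concretely, expanding $(x\circ y)\circ z$ produces a nested heap expression in nine ``atoms'' built from $P(x),P(y),P(z),x,y,z$ and the products $P$ of various brackets; the Reynolds identity applied to the inner bracket rewrites $P[P(x)y,P(x)P(y),xP(y)]\cdot(\text{stuff})$ as $(P(x)P(y))\cdot(\text{stuff})$, after which associativity of $\cdot$ and of the heap bracket, plus Lemma~\ref{LEMMA} to permute nested brackets, bring it to a normal form. Doing the same for $x\circ(y\circ z)$ and comparing normal forms gives equality. The bookkeeping is heavier than in the Rota–Baxter case because the middle slot $P(x)P(y)$ interacts with both the $\succ$-type and $\prec$-type slots, so I expect to need Lemma~\ref{LEMMA} two or three times rather than once.

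**Alternative cleaner route.** Rather than a brute-force heap computation, I would prefer to factor the proof through the NS-truss machinery already set up: by the (Reynolds analogue of) Theorem~\ref{TrusToDendTrus}, a Reynolds operator $P$ on $T$ yields an NS-truss structure on the same heap via $x\succ y=P(x)\cdot y$, $x\prec y=x\cdot P(y)$, $x\curlyvee y=-\,$(the correction term)$\,=$ the piece making $x\circ y=[x\succ y,\,x\curlyvee y,\,x\prec y]$ — here one checks axioms \eqref{NS1}–\eqref{NS5} directly from \eqref{ReynoldCon}, each reducing to associativity of $\cdot$ after one use of the Reynolds identity. Then Theorem~\ref{NSTrusToNSRing} immediately gives that $x\circ y=[x\succ y,x\curlyvee y,x\prec y]$ is a truss product, which is exactly the claim. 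This keeps the messy Reynolds-identity manipulations confined to the five NS-axioms (each a short one-line check) and reuses the already-proven associativity bookkeeping inside Theorem~\ref{NSTrusToNSRing}. Either way, since $P(\mathbf 0)=\mathbf 0$ (it is forced, or assumed, as for $R$) and $\mathbf 0\cdot t=t\cdot\mathbf 0=\mathbf 0$, one sees $\mathbf 0\circ t=t\circ\mathbf 0=[\mathbf 0,\mathbf 0,\mathbf 0]=\mathbf 0$, so the absorber is preserved.
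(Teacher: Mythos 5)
Your primary plan is exactly the paper's proof: distributivity falls out of Lemma~\ref{LEMMA} and the heap-morphism property of $P$, and associativity is a direct computation in which the Reynolds identity \eqref{ReynoldCon} is used to replace $P(x\circ y)$ by $P(x)\cdot P(y)$ before expanding $(x\circ y)\circ z$. If anything you overestimate the bookkeeping: once one notices that the first two slots of $(x\circ y)\circ z$ are $(P(x)P(y))\cdot z$ and $(P(x)P(y))\cdot P(z)$ and only the third slot expands into a nested bracket, a single rearrangement of the heap bracket (moving that nested triple into the first slot) already produces $x\circ(y\circ z)$; Lemma~\ref{LEMMA} is not needed again for associativity. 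Your alternative route through the NS-truss axioms \eqref{NS1}--\eqref{NS5} and Theorem~\ref{NSTrusToNSRing} is also sound and is in effect what the paper does two results later (with $x\curlyvee y=P(x)\cdot P(y)$, no sign needed since the middle heap slot already plays the role of subtraction); it trades one medium-length computation for five short ones plus the already-proved associativity of Theorem~\ref{NSTrusToNSRing}, at the cost of reordering the exposition. One small correction: the theorem does not assert anything about an absorber, and the paper's definition of a Reynolds operator does not assume $P(\mathbf 0)=\mathbf 0$ (nor is it forced by \eqref{ReynoldCon}), so your closing remark about $\mathbf 0\circ t=\mathbf 0$ should simply be dropped.
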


\begin{proof}
 Let $x,y,z,t \in T$. Then 
   { \small
 \begin{align*}
 x \circ [y,z,t]=&[ [P(x)\cdot y,P(x)\cdot z,P(x)\cdot t],[P(x)\cdot P(y),P(x)\cdot P(z),P(x)\cdot P(t)],[x\cdot P(y),x\cdot P(z),x \cdot P(t)] ]    \\
 =& [ [P(x)\cdot y,P(x)\cdot P(y),x\cdot P(y)],[P(x)\cdot z,P(x)\cdot P(z),x\cdot P(z)],[P(x)\cdot t,P(x)\cdot P(t),x \cdot P(t)] ] \\
 =&[x \circ y,x\circ z,x \circ t]. 
 \end{align*}
 }
 Similarly, we show that $[y,z,t]\circ x=[y\circ x,z \circ x,t \circ x]$. It remains to prove the associativity of $\circ$. We have
 {\small
 \begin{align*}
 (x \circ y)\circ z=&[P(x)\cdot y,P(x)\cdot P(y),x\cdot P(y)] \circ z \\
 =& [(P(x)\cdot P(y)) \cdot z,(P(x)\cdot P(y))\cdot P(z),
 [(P(x)\cdot y)\cdot P(z),(P(x)\cdot P(y))\cdot P(z),(x\cdot P(y))\cdot P(z)] ] \\
 =& [P(x)\cdot (P(y) \cdot z),P(x)\cdot ( P(y)\cdot P(z)),
 [P(x)\cdot (y\cdot P(z)),P(x)\cdot (P(y)\cdot P(z)),x\cdot (P(y)\cdot P(z))] ] \\
  =&[ [P(x)\cdot (P(y) \cdot z),P(x)\cdot ( P(y)\cdot P(z)),
 P(x)\cdot (y\cdot P(z))] ,P(x)\cdot (P(y)\cdot P(z)),x\cdot (P(y)\cdot P(z)) ]  \\
 =&[ P(x)\cdot[ P(y) \cdot z, P(y)\cdot P(z),
 y\cdot P(z)] ,P(x)\cdot (P(y)\cdot P(z)),x\cdot (P(y)\cdot P(z)) ] \\
 =& [P(x) \cdot (y \circ z), P(x)\cdot P(y \circ z), x \cdot P(y \circ z) ] \\
 =& x \circ (y \circ z).
 \end{align*}
 }
\end{proof}

\begin{thm}
  Let $(T,[\cdot,\cdot,\cdot],\cdot)$ be a truss and $P: T \to T$ be a Reynolds operator. Define the products 
  \begin{align}
      & x \succ y =P(x)\cdot y,\quad x \ns y=P(x)\cdot P(y),\quad  x \prec y=x\cdot P(y),\ \forall x,y \in T.
  \end{align}
  Then  $(T,[\cdot,\cdot,\cdot],\succ,\ns,\prec)$ is  an NS-truss. In addition if  $(T,[\cdot,\cdot,\cdot],\dd)$ is the subadjacent truss, the $P$ is truss homomorphism from  
  $(T,[\cdot,\cdot,\cdot],\dd)$ to $(T,[\cdot,\cdot,\cdot],\cdot)$. 
\end{thm}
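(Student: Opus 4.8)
The plan is to verify the five NS-truss axioms \eqref{NS1}--\eqref{NS5} for the triple $(\succ,\ns,\prec)$ defined by $x\succ y=P(x)\cdot y$, $x\ns y=P(x)\cdot P(y)$, $x\prec y=x\cdot P(y)$, and then to check separately that $P$ is a morphism from the subadjacent truss to $(T,[\cdot,\cdot,\cdot],\cdot)$. The distributivity of each of the three products over the heap bracket is immediate from the distributivity laws \eqref{dist} in $T$ together with the fact that $P$ is a heap morphism (for the $\ns$ product one uses both distributivity and Lemma \ref{LEMMA} as in the proof of the previous theorem). So the content is entirely in the five "associativity-type" identities.

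For \eqref{NS2} and \eqref{NS4}, the verification is a one-line associativity computation in $(T,\cdot)$: $(x\succ y)\prec z=(P(x)\cdot y)\cdot P(z)=P(x)\cdot(y\cdot P(z))=x\succ(y\prec z)$, and likewise $(x\ns y)\prec z=(P(x)\cdot P(y))\cdot P(z)=P(x)\cdot(P(y)\cdot P(z))=x\succ(y\ns z)$. The three genuinely nontrivial axioms are \eqref{NS1}, \eqref{NS3}, \eqref{NS5}, and each of them hinges on rewriting the bracketed triple $[P(x)\cdot y,P(x)\cdot P(y),x\cdot P(y)]$ that appears inside an application of $P$ as exactly $P(x)\cdot P(y)$ via the Reynolds identity \eqref{ReynoldCon}. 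Concretely, for \eqref{NS3}: the left side is $[x\succ y,x\ns y,x\prec y]\succ z=P\big([P(x)\cdot y,P(x)\cdot P(y),x\cdot P(y)]\big)\cdot z$; applying \eqref{ReynoldCon} this equals $\big(P(x)\cdot P(y)\big)\cdot z=P(x)\cdot(P(y)\cdot z)=x\succ(y\succ z)$, using associativity in the last step. For \eqref{NS1} the mirror computation works: $(x\prec y)\prec z=(x\cdot P(y))\cdot P(z)=x\cdot\big(P(y)\cdot P(z)\big)=x\cdot P\big([P(y)\cdot z,P(y)\cdot P(z),y\cdot P(z)]\big)=x\prec[y\succ z,y\ns z,y\prec z]$. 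Axiom \eqref{NS5} is the subtlest: the left side $[x\succ y,x\ns y,x\prec y]\ns z=P\big([P(x)\cdot y,P(x)\cdot P(y),x\cdot P(y)]\big)\cdot P(z)=\big(P(x)\cdot P(y)\big)\cdot P(z)$, while the right side $x\ns[y\succ z,y\ns z,y\prec z]=P(x)\cdot P\big([P(y)\cdot z,P(y)\cdot P(z),y\cdot P(z)]\big)=P(x)\cdot\big(P(y)\cdot P(z)\big)$, and these agree by associativity of $\cdot$. So in fact all three reduce to: apply \eqref{ReynoldCon} to collapse the inner bracket, then use associativity of the semigroup $(T,\cdot)$.

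Finally, for the statement that $P$ is a truss homomorphism from the subadjacent truss $(T,[\cdot,\cdot,\cdot],\dd)$ — where $x\dd y=[x\succ y,x\ns y,x\prec y]=[P(x)\cdot y,P(x)\cdot P(y),x\cdot P(y)]$ by Theorem \ref{NSTrusToNSRing} — to $(T,[\cdot,\cdot,\cdot],\cdot)$: $P$ is a heap morphism by hypothesis, so it only remains to see $P(x\dd y)=P(x)\cdot P(y)$, which is precisely the Reynolds identity \eqref{ReynoldCon} read as $P\big([P(x)\cdot y,P(x)\cdot P(y),x\cdot P(y)]\big)=P(x)\cdot P(y)$.

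The main obstacle is essentially bookkeeping rather than a conceptual hurdle: one must be careful that in axioms \eqref{NS1}, \eqref{NS3}, \eqref{NS5} the bracket being simplified is literally the argument appearing inside $P$ (and inside the correct factor), so that \eqref{ReynoldCon} applies verbatim; no regrouping of heap brackets à la Lemma \ref{LEMMA} is needed there, unlike in the subadjacent-truss associativity proof. The routine part is spelling out the two distributivity laws for each of $\succ$, $\ns$, $\prec$; I would simply remark that these follow from \eqref{dist} and the heap-morphism property of $P$ (with Lemma \ref{LEMMA} for $\ns$) exactly as before, and not write them out in full.
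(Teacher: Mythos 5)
Your proposal is correct and follows essentially the same route as the paper's own proof: collapse the bracket $[P(x)\cdot y,\,P(x)\cdot P(y),\,x\cdot P(y)]$ inside $P$ via the Reynolds identity \eqref{ReynoldCon} and then invoke associativity of $\cdot$ for \eqref{NS1}, \eqref{NS3}, \eqref{NS5}, with \eqref{NS2} and \eqref{NS4} being pure associativity. You are in fact slightly more complete than the paper, since you also spell out the distributivity checks and the final claim that $P$ is a truss morphism (which is just \eqref{ReynoldCon} reread), both of which the paper leaves implicit.
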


\begin{proof}
For all $x,y,z \in T$, we have:
\begin{align*}
 x \prec[y \succ z,y \ns z, y\prec z]=& x \prec [P(y)\cdot z, P(y)\cdot P(z), y \cdot P(z)] \\
 =& x \cdot P([P(y)\cdot z, P(y)\cdot P(z), y \cdot P(z)]) \\
 =& x \cdot (P(y) \cdot P(z)) \\
 =& (x \cdot P(y)) \cdot P(z) \\
 =& (x \prec y) \prec z. 
\end{align*}
Hence we have proved Equation \eqref{NS1}. To show Equation \eqref{NS2}, we compute as follows
\begin{align*}
 (x \succ y) \prec z= & (P(x) \cdot y) \prec z \\
 =& (P(x) \cdot y) \cdot P(z) \\
 =& P(x) \cdot (y \cdot P(z) )= x \succ (y \prec z). 
\end{align*}
In addition, we have 
\begin{align*}
 [x \succ y,x \ns y,x \prec y]\ns z=& [P(x)\cdot y, P(x)\cdot P(y),x \cdot P(y)] \ns z \\
 =& P([P(x)\cdot y, P(x)\cdot P(y),x \cdot P(y)]) \cdot P(z)\\
 =& (P(x) \cdot P(y)) \cdot P(z) \\
 =& P(x) \cdot (P(y) \cdot P(z)) \\
 =& x \ns [y \succ z,y \ns z,y \prec z],
\end{align*}
which implies Equation  \eqref{NS5}. Identities \eqref{NS3} and \eqref{NS4} can be similarly proved. 
 \end{proof}
In  the next paragraph, we recall the notion of Nijenhuis operator on a truss, introduced in \cite{TB2},  and provide their connection with NS-trusses.

\subsection{Nijenhuis operators on trusses}
\begin{defn}
     Let $(T,[\cdot,\cdot,\cdot],\cdot)$ be a truss.  A heap homomorphism $N: T \to T$ is called a Nijenhuis operator if it satisfies the following integrability condition:
     \begin{align}
   & N(x)\cdot N(y)=N([N(x)\cdot y, N(x\cdot y), x\cdot N(y)])  ,
     \end{align}
 for all $x,y \in T$.     
\end{defn}

\begin{lem}
   Let $(T,[\cdot,\cdot,\cdot],\cdot)$ be a truss and  $N: T \to T$ be a Nijenhuis operator. Then the product 
   \begin{align}
     & x \circ y=  [N(x)\cdot y, N(x\cdot y), x\cdot N(y)]
   \end{align}
   provides a truss structure on $T$. Moreover, $N$ is a truss morphism from $(T,[\cdot,\cdot,\cdot],\circ)$ to $(T,[\cdot,\cdot,\cdot],\cdot)$. 
\end{lem}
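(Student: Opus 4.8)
The statement is the direct analogue for Nijenhuis operators of the two theorems already proved for Rota-Baxter and Reynolds operators, so I would follow exactly the same template. There are two things to establish: (i) that $\circ$ defined by $x\circ y=[N(x)\cdot y,\ N(x\cdot y),\ x\cdot N(y)]$ is a truss product on the heap $(T,[\cdot,\cdot,\cdot])$, i.e. it is associative and bilinearly distributive over the heap bracket; and (ii) that $N:(T,[\cdot,\cdot,\cdot],\circ)\to(T,[\cdot,\cdot,\cdot],\cdot)$ is a truss morphism. The morphism claim (ii) is the easy half: $N$ is a heap homomorphism by hypothesis, and $N(x\circ y)=N([N(x)\cdot y,N(x\cdot y),x\cdot N(y)])=N(x)\cdot N(y)$ is precisely the Nijenhuis integrability identity, so this is immediate once (i) is in place.

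For distributivity in (i), I would expand $x\circ[y,z,t]$ using left distributivity of $\cdot$ over the heap bracket in the first and third slots, together with the fact that $N$ is a heap homomorphism (so $N(x\cdot[y,z,t])=N([x\cdot y,x\cdot z,x\cdot t])=[N(x\cdot y),N(x\cdot z),N(x\cdot t)]$) in the middle slot; then Lemma \ref{LEMMA} reorganizes the resulting triple-of-triples into $[x\circ y,x\circ z,x\circ t]$. Right distributivity is symmetric, using right distributivity of $\cdot$ and again Lemma \ref{LEMMA}. This is routine and parallels the distributivity computations in the Rota-Baxter and Reynolds theorems verbatim.

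The associativity check is the main obstacle, exactly as in the Reynolds case. The plan is to expand both $(x\circ y)\circ z$ and $x\circ(y\circ z)$. For $(x\circ y)\circ z$: set $u=x\circ y=[N(x)\cdot y,N(x\cdot y),x\cdot N(y)]$, then $u\circ z=[N(u)\cdot z,\ N(u\cdot z),\ u\cdot N(z)]$; use the Nijenhuis identity $N(u)=N[N(x)\cdot y,N(x\cdot y),x\cdot N(y)]=N(x)\cdot N(y)$ on the outer $N$, use heap-homomorphism of $N$ and distributivity to handle $N(u\cdot z)=N([N(x)\cdot y\cdot z,\ N(x\cdot y)\cdot z,\ x\cdot N(y)\cdot z])=[N(N(x)\cdot y\cdot z),\ N(N(x\cdot y)\cdot z),\ N(x\cdot N(y)\cdot z)]$, and expand $u\cdot N(z)$ directly by distributivity. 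Do the symmetric expansion for $x\circ(y\circ z)$. Then match the two resulting $3\times3$-nested heap expressions: after using associativity of $\cdot$, the two sides will involve the same nine "words" but bracketed differently, and one finishes by repeated application of heap associativity \eqref{assoc}, commutativity \eqref{comm}, and Lemma \ref{LEMMA} to rebracket, just as was done in the Reynolds proof. The only subtlety to watch is the middle terms containing $N$ applied to products with one factor an $N$-image — here one needs the Nijenhuis identity in the "displaced" form $N(x)\cdot N(y\cdot z-\text{type expression})$; concretely, terms like $N(N(x\cdot y)\cdot z)$ and $N(N(y)\cdot x)$-analogues must be shown to coincide after rebracketing, which again reduces to the defining identity plus heap axioms. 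I expect no new idea is needed beyond what the Reynolds proof already contains; it is purely a bookkeeping exercise in the heap calculus.
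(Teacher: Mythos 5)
Your overall plan is sound, and for what it is worth the paper states this lemma without any proof, so the only benchmarks are its Rota--Baxter and Reynolds arguments, which you correctly take as the template. The distributivity step (left/right distributivity of $\cdot$ in the outer slots, the heap-morphism property of $N$ in the middle slot, then Lemma \ref{LEMMA} to interchange the nested brackets) and the morphism claim ($N(x\circ y)=N(x)\cdot N(y)$ is literally the Nijenhuis identity) are both correct as you describe them.

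The one step that would fail as written is the assertion that, after expanding, ``the two sides will involve the same nine words but bracketed differently,'' so that only heap rebracketing is needed. Flattening both sides into alternating sums in a retract group, four terms do agree on the nose ($N(x)N(y)z$, $N(x)yN(z)$, $-N(xN(y)z)$, $xN(y)N(z)$), but $(x\circ y)\circ z$ also contributes $-N(N(x)yz)+N(N(xy)z)-N(xy)N(z)$ while $x\circ(y\circ z)$ contributes $-N(x)N(yz)+N(xN(yz))-N(xyN(z))$, and these six words are pairwise distinct: no rebracketing identifies them. This is exactly where the Nijenhuis case departs from the Reynolds case, whose associativity proof never re-invokes the defining identity after the initial $P(x\circ y)=P(x)P(y)$. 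To close the argument you must apply the Nijenhuis identity twice more, to the pairs $(xy,z)$ and $(x,yz)$:
\begin{align*}
N(xy)\cdot N(z)&=[N(N(xy)\cdot z),\,N(N(xyz)),\,N(xy\cdot N(z))],\\
N(x)\cdot N(yz)&=[N(N(x)\cdot yz),\,N(N(xyz)),\,N(x\cdot N(yz))],
\end{align*}
after which both leftover blocks reduce to $-N(N(x)yz)+N(N(xyz))-N(xyN(z))$; the word $N(N(xyz))$, which appears on neither side initially, is the pivot, and associativity of $\cdot$ inside the arguments of $N$ is what makes the two occurrences of $N(N(xyz))$ coincide. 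You do gesture at this (``one needs the Nijenhuis identity in the displaced form''), so the missing piece is one of precision rather than of idea, but the literal matching step you propose is incorrect and should be replaced by the double re-application above.
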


\begin{prop}
        Let $(T,[\cdot,\cdot,\cdot],\cdot)$ be a truss and $R: T \to T$ be a Rota-Baxter operator of weight $\bf 0$. Define an operator on $T\times T$ by
        $$N(x,y)=(R(y),{\bf 0}),\quad\forall x,y\in T. $$
        Then $N$ is Nijenhuis operator on the truss $T\ltimes T$ given in Proposition \ref{TrussTimes}.
    \end{prop}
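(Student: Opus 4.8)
The plan is to verify directly that $N(x,y)=(R(y),\mathbf 0)$ satisfies the Nijenhuis integrability condition on $T\ltimes T$, namely
\[
N(X)\odot N(Y)=N\bigl(\Courant{N(X)\odot Y,\ N(X\odot Y),\ X\odot N(Y)}\bigr)
\]
for all $X=(a,x)$, $Y=(b,y)$ in $T\times T$. First I would record that $N$ is a heap morphism: since $N(x,y)=(R(y),\mathbf 0)$ and $R$ is a heap morphism with $R(\mathbf 0)=\mathbf 0$, applying $N$ to $\Courant{(a_1,x_1),(a_2,x_2),(a_3,x_3)}=([a_1,a_2,a_3],[x_1,x_2,x_3])$ gives $(R[x_1,x_2,x_3],\mathbf 0)=([R(x_1),R(x_2),R(x_3)],[\mathbf 0,\mathbf 0,\mathbf 0])$, which is exactly $\Courant{N(a_1,x_1),N(a_2,x_2),N(a_3,x_3)}$ since $[\mathbf 0,\mathbf 0,\mathbf 0]=\mathbf 0$. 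So $N$ is a heap homomorphism.

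Next I would compute both sides of the integrability condition explicitly using the formula $(a,x)\odot(b,y)=(a\cdot b,[a\cdot y,\mathbf 0,x\cdot b])$ from Proposition \ref{TrussTimes}. On the left, $N(a,x)\odot N(b,y)=(R(x),\mathbf 0)\odot(R(y),\mathbf 0)=(R(x)\cdot R(y),[R(x)\cdot\mathbf 0,\mathbf 0,\mathbf 0\cdot R(y)])=(R(x)\cdot R(y),\mathbf 0)$, using that $\mathbf 0$ is an absorber. On the right, I first need the three inner terms. We have $N(a,x)\odot(b,y)=(R(x),\mathbf 0)\odot(b,y)=(R(x)\cdot b,[R(x)\cdot y,\mathbf 0,\mathbf 0\cdot b])=(R(x)\cdot b,[R(x)\cdot y,\mathbf 0,\mathbf 0])=(R(x)\cdot b,R(x)\cdot y)$; similarly $(a,x)\odot N(b,y)=(a,x)\odot(R(y),\mathbf 0)=(a\cdot R(y),[a\cdot\mathbf 0,\mathbf 0,x\cdot R(y)])=(a\cdot R(y),x\cdot R(y))$; and $N(X\odot Y)=N(a\cdot b,[a\cdot y,\mathbf 0,x\cdot b])=(R[a\cdot y,\mathbf 0,x\cdot b],\mathbf 0)=([R(a\cdot y),\mathbf 0,R(x\cdot b)],\mathbf 0)$. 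Taking the heap bracket of these three pairs componentwise (using Lemma \ref{LEMMA} on the second components and the heap axioms), the second component is $[R(x)\cdot y,\mathbf 0,x\cdot R(y)]$, and the first component is $[R(x)\cdot b,[R(a\cdot y),\mathbf 0,R(x\cdot b)],a\cdot R(y)]$. Finally, applying $N$ to this pair yields $\bigl(R[R(x)\cdot y,\mathbf 0,x\cdot R(y)],\mathbf 0\bigr)$, and by the Rota-Baxter condition \eqref{RBCon} this equals $(R(x)\cdot R(y),\mathbf 0)$, matching the left-hand side.

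The only genuine obstacle is bookkeeping: one must carefully track which components collapse to $\mathbf 0$ via the absorber property and correctly transpose the componentwise heap brackets using Lemma \ref{LEMMA}, since the Nijenhuis formula nests three $\odot$-products inside a heap bracket inside $N$. Crucially, the entire verification reduces to the single identity $R[R(x)\cdot y,\mathbf 0,x\cdot R(y)]=R(x)\cdot R(y)$, which is precisely the hypothesis that $R$ is a Rota-Baxter operator of weight $\mathbf 0$; the first component of the inner bracket's $R$-image, $[R(a\cdot y),\mathbf 0,R(x\cdot b)]$, turns out to be irrelevant because after applying the outer $N$ only the second component $[R(x)\cdot y,\mathbf 0,x\cdot R(y)]$ survives, and that is exactly the argument of $R$ in \eqref{RBCon}. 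I would present the two side-by-side computations as aligned displays and conclude.
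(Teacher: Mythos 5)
Your proposal is correct and follows essentially the same route as the paper: a direct componentwise computation of both sides of the Nijenhuis identity in $T\ltimes T$, with everything collapsing via the absorber property to the single Rota--Baxter identity $R[R(x)\cdot y,\mathbf 0,x\cdot R(y)]=R(x)\cdot R(y)$. The only cosmetic difference is that the paper pushes $N$ inside the outer heap bracket (using that $N$ is a heap morphism) before evaluating, whereas you evaluate the bracket first and then apply $N$; the two orderings are interchangeable.
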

    \begin{proof}
        Since $R$ is a heap  homomorphism, then $N$ is a heap homomorphism on $T\times T$. Let $a,b,x,y\in T,$
        \begin{align*}
            N(a,x)\odot N(b,y)&=(R(x),{\bf 0})\odot (R(y),{\bf 0})\\
            &=(R(x)\cdot R(y),{\bf 0})\\
            &=(R[R(x)\cdot y,{\bf 0},x\cdot R(y)],{\bf 0}).
        \end{align*}
        On the other hand, we have 
        \begin{align*}
           &N( \Courant{N(a,x)\odot (b,y),N((a,x)\odot (b,y)),(a,x)\odot N(b,y)}) \\
           =&N( \Courant{(R(x),{\bf 0})\odot (b,y),N(a\cdot b,[a\cdot y, {\bf 0}, x\cdot b]),(a,x)\odot (R(y),{\bf 0})})
           \\
           =&N( \Courant{(R(x)\cdot b,R(x)\cdot y),(R([a\cdot y, {\bf 0}, x\cdot b]), {\bf 0}),(a\cdot R(y),x\cdot R(y))})\\
           =& \Courant{(R(R(x)\cdot y),{\bf 0}),({\bf 0}, {\bf 0}),(R(x\cdot R(y)),{\bf 0})}\\
            =&(R[R(x)\cdot y,{\bf 0},x\cdot R(y)],{\bf 0}).
        \end{align*}
        This completes the proof.
    \end{proof}

\begin{prop}
   Let $(T,[\cdot,\cdot,\cdot],\cdot)$ be a truss and  $N: T \to T$ be a Nijenhuis operator. Then the binary operations  
   \begin{align}
     & x \succ y= N(x)\cdot y, \quad
     x \ns y=N(x\cdot y), \quad  x \prec y=x\cdot N(y)
   \end{align}
   provide an NS-truss structure on $T$.    
\end{prop}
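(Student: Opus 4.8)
The plan is to verify the five NS-truss axioms \eqref{NS1}--\eqref{NS5} directly for the operations $x\succ y=N(x)\cdot y$, $x\ns y=N(x\cdot y)$, $x\prec y=x\cdot N(y)$, exactly as was done for Reynolds operators in the preceding theorem, using the Nijenhuis integrability condition $N(x)\cdot N(y)=N([N(x)\cdot y,N(x\cdot y),x\cdot N(y)])$ as the single nontrivial input. The key observation that makes everything work is that the subadjacent product of the NS-truss is precisely $x\dd y=[x\succ y,x\ns y,x\prec y]=[N(x)\cdot y,N(x\cdot y),x\cdot N(y)]$, which is the deformed product $x\circ y$ from the Lemma on Nijenhuis operators, and that $N(x)\cdot N(y)=N(x\dd y)$. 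So I would first record this identity, then feed it into each axiom.

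For the axioms that only involve $\succ$ and $\prec$ (namely \eqref{NS2}) the proof is pure associativity of $\cdot$: $(x\succ y)\prec z=(N(x)\cdot y)\cdot N(z)=N(x)\cdot(y\cdot N(z))=x\succ(y\prec z)$. For \eqref{NS1} I would compute $(x\prec y)\prec z=(x\cdot N(y))\cdot N(z)=x\cdot(N(y)\cdot N(z))=x\cdot N([N(y)\cdot z,N(y\cdot z),y\cdot N(z)])=x\cdot N(y\dd z)=x\prec(y\dd z)=x\prec[y\succ z,y\ns z,y\prec z]$, using the integrability condition in the middle step. The axiom \eqref{NS3} is the mirror image: $[x\succ y,x\ns y,x\prec y]\succ z=(x\dd y)\succ z=N(x\dd y)\cdot z=(N(x)\cdot N(y))\cdot z=N(x)\cdot(N(y)\cdot z)=x\succ(y\succ z)$. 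Axiom \eqref{NS4} reads $(x\ns y)\prec z=x\succ(y\ns z)$, i.e. $N(x\cdot y)\cdot N(z)=N(x)\cdot N(y\cdot z)$; but both sides equal $N((x\cdot y)\cdot N(z))$ expanded via integrability — more carefully, $N(x\cdot y)\cdot N(z)=N([N(x\cdot y)\cdot z, N((x\cdot y)\cdot z), (x\cdot y)\cdot N(z)])$ and $N(x)\cdot N(y\cdot z)=N([N(x)\cdot(y\cdot z),N(x\cdot(y\cdot z)),x\cdot N(y\cdot z)])$, and these two bracketed triples coincide term by term after using associativity of $\cdot$ and of the heap bracket. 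Finally \eqref{NS5}, $[x\succ y,x\ns y,x\prec y]\ns z=x\ns[y\succ z,y\ns z,y\prec z]$, becomes $N((x\dd y)\cdot z)=N(x\cdot(y\dd z))$, which holds because the Lemma already tells us $\dd$ is associative: $(x\dd y)\cdot z$ and $x\cdot(y\dd z)$ need not be equal as written, so here I must be more careful — actually \eqref{NS5} unwinds to $N(x\dd y)\cdot\text{(something)}$; I would instead observe $[x\succ y,x\ns y,x\prec y]\ns z = (x\dd y)\ns z = N((x\dd y)\cdot z)$ where $(x\dd y)\cdot z$ means the original product, and $x\ns[y\succ z,y\ns z,y\prec z]=x\ns(y\dd z)=N(x\cdot(y\dd z))$, so I need $N$ applied to $(x\dd y)\cdot z$ and to $x\cdot(y\dd z)$ to agree; expanding both via the integrability condition and heap-associativity (Lemma \ref{LEMMA}) reduces this to the associativity of $\dd$ established in the Lemma on Nijenhuis operators.

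The main obstacle will be axiom \eqref{NS5} (and to a lesser extent \eqref{NS4}): unlike the Reynolds case where $P$ is idempotent-like and the bracket collapses cleanly, here the term $x\ns y=N(x\cdot y)$ carries an extra application of $N$, so one cannot simply cancel. The resolution is to route the computation through the already-proven facts that $\circ=\dd$ is an associative product and that $N$ is a truss morphism $(T,\dd)\to(T,\cdot)$ — both consequences of the Lemma on Nijenhuis operators — so that $N(x\dd y)\cdot z = (N(x)\cdot N(y))\cdot z$ and similar rewrites turn \eqref{NS5} into the heap-associativity identity of Lemma \ref{LEMMA} combined with the associativity of $\cdot$. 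Thus I would structure the proof as: (1) note $x\dd y=x\circ y$ and $N(x)\cdot N(y)=N(x\dd y)$; (2) dispatch \eqref{NS2},\eqref{NS3} by associativity of $\cdot$; (3) dispatch \eqref{NS1} by associativity of $\cdot$ plus one use of integrability; (4) dispatch \eqref{NS4} by expanding both sides with integrability and comparing triples; (5) dispatch \eqref{NS5} using associativity of $\dd$ from the Nijenhuis Lemma together with Lemma \ref{LEMMA}. The distributivity of $\succ,\ns,\prec$ over the heap bracket is immediate from the distributivity laws in $T$ and the fact that $N$ is a heap morphism, so I would mention it in one line.
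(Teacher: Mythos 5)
Your verifications of \eqref{NS1}, \eqref{NS2} and \eqref{NS3} are correct and coincide with the computations the paper actually writes out. The gap is exactly where you suspected it: \eqref{NS4} and \eqref{NS5}. For \eqref{NS4} you expand $N(x\cdot y)\cdot N(z)=N\bigl([N(x\cdot y)\cdot z,\,N(x\cdot y\cdot z),\,x\cdot y\cdot N(z)]\bigr)$ and $N(x)\cdot N(y\cdot z)=N\bigl([N(x)\cdot y\cdot z,\,N(x\cdot y\cdot z),\,x\cdot N(y\cdot z)]\bigr)$ and assert that the two triples ``coincide term by term''; they do not. Only the middle entries agree: $N(x\cdot y)\cdot z\neq N(x)\cdot(y\cdot z)$ and $(x\cdot y)\cdot N(z)\neq x\cdot N(y\cdot z)$ in a noncommutative truss. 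For \eqref{NS5}, unwinding the definitions gives $N\bigl((x\dd y)\cdot z\bigr)=N\bigl(x\cdot(y\dd z)\bigr)$, where $\cdot$ is the \emph{original} product; associativity of $\dd$ only gives $(x\dd y)\dd z=x\dd(y\dd z)$, which is a different statement, so the reduction you propose does not go through. Moreover these are not repairable oversights: in the truss of upper-triangular $2\times 2$ matrices with $N(a)=E_{11}\cdot a$ (which is a Nijenhuis operator), taking $x=E_{11}$, $y=E_{12}$, $z=E_{22}$ gives $(x\ns y)\prec z=0$ while $x\succ(y\ns z)=E_{12}$, and similarly $[x\succ y,x\ns y,x\prec y]\ns z=E_{12}$ while $x\ns[y\succ z,y\ns z,y\prec z]=0$. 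What a Nijenhuis operator genuinely produces is the \emph{combined} compatibility appearing in the NS-ring definition (the heap-combination of \eqref{NS4} and \eqref{NS5}), not the two identities separately.

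For what it is worth, the paper's own proof stumbles at the same spot: its verification of \eqref{NS5} computes $[x\succ y,x\ns y,x\prec y]\ns z$ as $N([\cdots])\cdot N(z)$, i.e.\ it silently uses the Reynolds-type formula $a\ns b=N(a)\cdot N(b)$ instead of the stated $a\ns b=N(a\cdot b)$, and then declares \eqref{NS3} and \eqref{NS4} analogous. So your instinct to treat \eqref{NS4}--\eqref{NS5} with extra care was sound, but the resolution you sketch does not close the gap, and no resolution can unless the NS-truss axioms are weakened to the combined form.
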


\begin{proof}
 Let $x,y,z \in T$. Then we have:
 \begin{align*}
 x \prec [y \succ z, y \ns z y \prec z]=& x \prec 
 [N(y)\cdot z, N(y \cdot z), y \cdot N(z)] \\
 =& x \cdot N([N(y)\cdot z, N(y \cdot z), y \cdot N(z)])\\
 =& x \cdot (N(y) \cdot N(z)) \\
 =& (x \cdot N(y)) \cdot N(z) \\
 =& (x \prec y) \prec z. 
 \end{align*}
 Hence, the identity \eqref{NS1} holds. 
 To show \eqref{NS2}, we compute as follows:
\begin{align*}
 (x \succ y) \prec z= & (N(x) \cdot y) \prec z \\
 =& (N(x) \cdot y) \cdot N(z) \\
 =& N(x) \cdot (y \cdot N(z) )= x \succ (y \prec z). 
\end{align*}
On the other hand, we have 
\begin{align*}
 [x \succ y,x \ns y,x \prec y]\ns z=& [N(x)\cdot y, N(x\cdot y),x \cdot N(y)] \ns z \\
 =& N([N(x)\cdot y, N(x\cdot y),x \cdot N(y)]) \cdot N(z)\\
 =& (N(x) \cdot N(y)) \cdot N(z) \\
 =& N(x) \cdot (N(y) \cdot N(z)) \\
 =& x \ns [y \succ z,y \ns z,y \prec z].
\end{align*}
Hence the identity \eqref{NS5} is proved. The other equalities can be checked in the same way. 
\end{proof}
We have shown that ring, trusses, dendriform rings,  dendriform trusses, NS-rings and NS-trusses are
closely related in the sense of commutative diagram of categories as follows:

\begin{equation*} 
    \xymatrix{
\text{NS-Rings}
\ar[rr]^{\text{Theorem \ref{NSTrusToNSRing}}}\ar@/_4pc/[dd]_{\text{Remark \ref{NSRingToRing}}} & 
&\text{NS-Trusses.}  
  \\
\text{Dend. Rings}\ar[rr]^{ }  \ar[u]_{\text{ }}
&   & \text{Dend. Trusses} \ar[u]^{\text{ }} \ar[d]^{\text{ Theo. \ref{DendTrusToTrus}}}  \\
\text{Rings.} \ar[u]^{\text{ }}_{\text{ }}   \ar[rr]^{\text{Remark \ref{TrussesRings}.}} &    &
 \text{Trusses} \ar[u]^{\text{Theo.} \ref{TrusToDendTrus}} \ar@/_4pc/[uu]_{\text{ }} \ar[ll]  
}
\end{equation*}

\section{Averaging operators on trusses, di-trusses and tri-trusses} \label{Sec5}
In this section, we introduce the notion of (homomorphic) averaging operator on trusses. We investigate (homomorphic) averaging operators to construct di(tri)-trusses as a generalization of trusses. 

\begin{defn}\label{diass}
    A     di-truss $(T, [\cdot,\cdot,\cdot] ,\vdash,\dashv)$ consists of an abelian heap $(A,[\cdot,\cdot,\cdot])$ equipped with two associative   products $\vdash,\dashv:A\otimes A\xrightarrow{}A$ which are distributive with respect to bracket  and satisfy
\begin{align}
 &(x\dashv y)\vdash z = x\vdash (y\vdash z),\label{axiom3}\\&    (x\dashv y)\dashv z = x\dashv (y\vdash z),  \label{axiom4}\\
    &(x\vdash y)\dashv z = x\vdash (y\dashv z)  ,\label{axiom5}
\end{align}
for all $x,y,z\in A$.

\end{defn}
\begin{defn}
    A di-truss   $(A,[\cdot,\cdot,\cdot],\vdash,\dashv)$ is called tri-truss if it is equipped with associative product $\bot:A\otimes A\to A$ which are distributive with respect to bracket  and satisfy
    \begin{align}
      (x \dashv y) \dashv  z & =  x \dashv (y \,\bot\, z), \label{axiom6} \\
   (x \,\bot\, y) \dashv  z & =  x \,\bot\, (y \dashv z), \label{axiom7} \\
   (x \dashv y) \,\bot\,  z & =  x \,\bot\, (y \vdash z), \label{axiom8} \\
   (x \vdash y) \,\bot\,  z & =  x \vdash (y \,\bot\, z), \label{axiom9} \\
   (x \,\bot\, y) \vdash  z & =  x \vdash (y \vdash z), \label{axiom10}
   \end{align}
   for all $x,y,z\in A$.
\end{defn}
\begin{defn} \label{AverDef}
    Let $(T,[\cdot,\cdot,\cdot],\cdot)$ be a truss.  A map $\mathcal{K}:T\to T$ is called 
    \begin{enumerate}
        \item left
    averaging operator  if it is a heap  morphism of $(T,[\cdot,\cdot,\cdot])$ and satisfying 
    \begin{equation}
        \label{LeftAverCon}
        \mathcal{K}(x)\cdot \mathcal{K}(y)=\mathcal{K}(\mathcal{K}(x)\cdot y),\quad \forall x,y\in T.
    \end{equation}
    \item right
    averaging operator if it is a heap morphism of  $(T,[\cdot,\cdot,\cdot])$ and satisfying 
    \begin{equation}
        \label{RightAverCon}
        \mathcal{K}(x)\cdot \mathcal{K}(y)=\mathcal{K}(x\cdot \mathcal{K}(y)),\quad \forall x,y\in T.
    \end{equation}
    \item   averaging operator if it is both left and right averaging.
    \item homomorphic averaging operator if it is both a truss homomorphism and  averaging operator.
    \end{enumerate}
\end{defn}
In the following, we give a characterization of (homomorphic) averaging operators via graphs. Let $(T,[\cdot,\cdot,\cdot],\cdot)$ be a truss. Define on $T\times T$ the operations
\begin{align}
        &\Courant{(a,x),(b,y),(c,y)}=([a,b,c],[x,y,z]),\\
       & (a,x)\triangleright (b,y)=(a\cdot b,a\cdot y),\\
       & (a,x)\triangleleft (b,y)=(a\cdot b,x\cdot b),\\
       & (a,x)\triangle  (b,y)=(a\cdot b,x\cdot y),
    \end{align}
     for all $a,b,c,x,y,z\in T$. Then $(T\times T,\Courant{\cdot,\cdot,\cdot},\triangleright)$ (resp. $(T\times T,\Courant{\cdot,\cdot,\cdot},\triangleleft)$ and $(T\times T,\Courant{\cdot,\cdot,\cdot},\triangle)$)  is a truss which is  called left (resp. right and medium) hemisemi-direct product.   
     \begin{prop}
         A map $\mathcal{K}:T\to T$ is a left (resp. right) averaging operator if and only if its graph $${\sf Gr}(\mathcal{K}):=\{(\mathcal{K}(x),x),\  x\in T\}$$ is a sub-truss of  left  (resp. right) hemisemi-direct product on $T\times T$.
     \end{prop}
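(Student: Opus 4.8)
The plan is to prove both directions of the equivalence for the left case; the right case is then completely symmetric (swap the roles of the two factors in $T\times T$ and of the two arguments of $\mathcal K$). First I would record the easy half: since the heap structure on $T\times T$ is defined componentwise and ${\sf Gr}(\mathcal K)$ is the set of pairs $(\mathcal K(x),x)$, the condition that $\mathcal K$ is a heap morphism is literally equivalent to ${\sf Gr}(\mathcal K)$ being closed under $\Courant{\cdot,\cdot,\cdot}$, i.e.\ a subheap of $(T\times T,\Courant{\cdot,\cdot,\cdot})$. This is the same observation already used in the Rota--Baxter and Reynolds cases, so I would just cite it.

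Next I would examine the multiplicative condition. Take two elements $(\mathcal K(x),x),(\mathcal K(y),y)\in{\sf Gr}(\mathcal K)$ and compute their product in the left hemisemi-direct product:
\[
(\mathcal K(x),x)\triangleright(\mathcal K(y),y)=\big(\mathcal K(x)\cdot \mathcal K(y),\ \mathcal K(x)\cdot y\big).
\]
This pair lies in ${\sf Gr}(\mathcal K)$ if and only if its first component equals $\mathcal K$ applied to its second component, that is, if and only if $\mathcal K(x)\cdot \mathcal K(y)=\mathcal K(\mathcal K(x)\cdot y)$, which is exactly the left averaging identity \eqref{LeftAverCon}. So ${\sf Gr}(\mathcal K)$ is closed under $\triangleright$ precisely when $\mathcal K$ is a left averaging operator (in the presence of the heap-morphism hypothesis).

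Combining the two observations: ${\sf Gr}(\mathcal K)$ is a subheap of $T\times T$ closed under $\triangleright$ — i.e.\ a subtruss of the left hemisemi-direct product — if and only if $\mathcal K$ is a heap morphism satisfying \eqref{LeftAverCon}, which is the definition of a left averaging operator. The right case follows by replacing $\triangleright$ with $\triangleleft$ and \eqref{LeftAverCon} with \eqref{RightAverCon}, the computation being $(\mathcal K(x),x)\triangleleft(\mathcal K(y),y)=(\mathcal K(x)\cdot \mathcal K(y),\ x\cdot \mathcal K(y))$. There is no real obstacle here; the only point requiring a line of justification is that a subset of the graph of a function that is closed under the componentwise heap bracket forces the function to respect the bracket, and conversely — essentially because the second components range over all of $T$ and determine the first uniquely.
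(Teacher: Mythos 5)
Your proof is correct and follows exactly the argument the paper intends: the paper's own proof is just the word ``Straightforward,'' relying on the same computation you spell out, which mirrors the graph characterization already given for Rota--Baxter operators. Your explicit verification that $(\mathcal K(x),x)\triangleright(\mathcal K(y),y)=(\mathcal K(x)\cdot\mathcal K(y),\,\mathcal K(x)\cdot y)$ lies in ${\sf Gr}(\mathcal K)$ precisely when \eqref{LeftAverCon} holds (and the analogous statement for $\triangleleft$) is exactly what is needed.
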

     \begin{proof}
        Straightforward. 
     \end{proof}
     \begin{coro}
        A map $\mathcal{K}:T\to T$ is a homomorphic averaging operator if and only if its graph ${\sf Gr}(\mathcal{K})$ is a sub-truss of  left , right and medium hemisemi-direct product on $T\times T$. 
     \end{coro}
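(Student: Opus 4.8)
The plan is to deduce the statement from the preceding Proposition together with one extra observation concerning the medium hemisemi-direct product. By Definition~\ref{AverDef}, a map $\mathcal{K}\colon T\to T$ is a homomorphic averaging operator precisely when it is simultaneously a left averaging operator, a right averaging operator, and a morphism of trusses. The first two conditions are already characterized above: $\mathcal{K}$ is left (resp. right) averaging if and only if ${\sf Gr}(\mathcal{K})$ is a sub-truss of the left (resp. right) hemisemi-direct product on $T\times T$. So it remains only to characterize the third condition in the same spirit.

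For the medium product I would argue exactly as in the proof of the Proposition. Since the heap structure on $T\times T$ is the componentwise one, ${\sf Gr}(\mathcal{K})$ is a sub-heap of $T\times T$ if and only if $\mathcal{K}$ is a heap morphism; this requirement is common to all three ambient trusses. Moreover, from the formula $(\mathcal{K}(x),x)\triangle(\mathcal{K}(y),y)=(\mathcal{K}(x)\cdot\mathcal{K}(y),\,x\cdot y)$ we see that ${\sf Gr}(\mathcal{K})$ is closed under $\triangle$ if and only if $\mathcal{K}(x)\cdot\mathcal{K}(y)=\mathcal{K}(x\cdot y)$ for all $x,y\in T$, i.e. $\mathcal{K}$ is multiplicative. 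Hence ${\sf Gr}(\mathcal{K})$ is a sub-truss of the medium hemisemi-direct product if and only if $\mathcal{K}$ is a truss homomorphism.

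Putting the three equivalences together yields the corollary: ${\sf Gr}(\mathcal{K})$ is simultaneously a sub-truss of the left, the right, and the medium hemisemi-direct products on $T\times T$ if and only if $\mathcal{K}$ is at once left averaging, right averaging, and a truss morphism, which is precisely the definition of a homomorphic averaging operator. I do not expect any genuine difficulty here: the only point to keep track of is that ``sub-truss'' imposes the same sub-heap condition in each of the three cases, so the three heap-morphism requirements coincide and it is enough to verify closure under $\triangleright$, $\triangleleft$, and $\triangle$ separately, which is immediate from their explicit definitions.
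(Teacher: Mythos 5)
Your proposal is correct and matches the route the paper intends: the corollary is an immediate combination of the preceding proposition (for the left and right products) with the observation that closure of ${\sf Gr}(\mathcal{K})$ under $\triangle$ is exactly multiplicativity of $\mathcal{K}$, which together with the common sub-heap condition gives the truss-homomorphism requirement. The paper leaves this argument implicit, and your write-up fills it in faithfully with no gaps.
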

\begin{prop}
    Let $\mathcal{K}$ be an  averaging operator on  a truss $(T,[\cdot,\cdot,\cdot])$. Then,  $(A,[\cdot,\cdot,\cdot],\vdash,\dashv)$ is a di-truss, where
    \begin{equation}
        \label{DiProduct}x\vdash y=\mathcal{K}(x)\cdot y \text{ and } x\dashv y=x\cdot \mathcal{K}(y),\quad \forall \ x,y\in T.
    \end{equation}
    Moreover, if $\mathcal{K}$ is homomorphic then $(A,[\cdot,\cdot,\cdot],\vdash,\dashv,\bot:=\cdot)$ is a tri-truss.  
\end{prop}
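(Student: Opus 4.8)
The plan is to verify the axioms directly, since every identity in sight unwinds to the associativity and bi-distributivity of $\cdot$ over the heap bracket, together with the fact that $\mathcal{K}$ is a heap morphism and the two averaging identities \eqref{LeftAverCon}, \eqref{RightAverCon} (and, for the tri-truss part, the multiplicativity of $\mathcal{K}$). First I would record the two collapsing rules I will use repeatedly: left averaging gives $\mathcal{K}(\mathcal{K}(x)\cdot y)=\mathcal{K}(x)\cdot\mathcal{K}(y)$, and right averaging gives $\mathcal{K}(x\cdot\mathcal{K}(y))=\mathcal{K}(x)\cdot\mathcal{K}(y)$; both rewrite $\mathcal{K}$ of a product one of whose factors already lies in the image of $\mathcal{K}$.

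For the di-truss claim I would begin with distributivity of $\vdash$ and $\dashv$ over $[\cdot,\cdot,\cdot]$: since $\mathcal{K}$ is a heap morphism and $\cdot$ is bi-distributive over the bracket, $x\vdash[y,z,t]=\mathcal{K}(x)\cdot[y,z,t]=[x\vdash y,x\vdash z,x\vdash t]$ and $[x,y,z]\vdash t=[\mathcal{K}(x),\mathcal{K}(y),\mathcal{K}(z)]\cdot t=[x\vdash t,y\vdash t,z\vdash t]$, and symmetrically for $\dashv$. Associativity of $\vdash$ comes from collapsing the nested term via left averaging: $(x\vdash y)\vdash z=\mathcal{K}(\mathcal{K}(x)\cdot y)\cdot z=(\mathcal{K}(x)\cdot\mathcal{K}(y))\cdot z=\mathcal{K}(x)\cdot(\mathcal{K}(y)\cdot z)=x\vdash(y\vdash z)$, and associativity of $\dashv$ is the mirror image using right averaging. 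Then the three axioms of Definition \ref{diass}: \eqref{axiom5}, namely $(x\vdash y)\dashv z=(\mathcal{K}(x)\cdot y)\cdot\mathcal{K}(z)=\mathcal{K}(x)\cdot(y\cdot\mathcal{K}(z))=x\vdash(y\dashv z)$, is pure associativity; \eqref{axiom3} uses right averaging to turn $\mathcal{K}(x\cdot\mathcal{K}(y))$ into $\mathcal{K}(x)\cdot\mathcal{K}(y)$; and \eqref{axiom4} uses left averaging to turn $\mathcal{K}(\mathcal{K}(y)\cdot z)$ into $\mathcal{K}(y)\cdot\mathcal{K}(z)$. Both one-sided averaging identities are thus genuinely used, which explains why a two-sided averaging operator is assumed.

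For the tri-truss claim I would set $\bot=\cdot$, so that $\bot$ is associative and bi-distributive over the bracket by the truss axioms. Of the five identities \eqref{axiom6}--\eqref{axiom10}, the three mixed ones \eqref{axiom7}, \eqref{axiom8}, \eqref{axiom9} are immediate from associativity of $\cdot$ alone, e.g. $(x\bot y)\dashv z=(x\cdot y)\cdot\mathcal{K}(z)=x\cdot(y\cdot\mathcal{K}(z))=x\bot(y\dashv z)$. The remaining two are exactly where the homomorphic hypothesis enters: $(x\dashv y)\dashv z=(x\cdot\mathcal{K}(y))\cdot\mathcal{K}(z)=x\cdot(\mathcal{K}(y)\cdot\mathcal{K}(z))=x\cdot\mathcal{K}(y\cdot z)=x\dashv(y\bot z)$ gives \eqref{axiom6}, and $(x\bot y)\vdash z=\mathcal{K}(x\cdot y)\cdot z=(\mathcal{K}(x)\cdot\mathcal{K}(y))\cdot z=\mathcal{K}(x)\cdot(\mathcal{K}(y)\cdot z)=x\vdash(y\vdash z)$ gives \eqref{axiom10}, both invoking $\mathcal{K}(x\cdot y)=\mathcal{K}(x)\cdot\mathcal{K}(y)$.

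I do not expect a genuine obstacle; the argument is a finite check entirely parallel to the Rota-Baxter and Reynolds computations earlier in the paper. The one thing to be careful about — the main obstacle in the weak sense — is bookkeeping: at each step one must apply the rewriting rule that is actually licensed, namely one of the two averaging identities precisely when a factor of the argument of $\mathcal{K}$ already lies in the image of $\mathcal{K}$, and the multiplicativity of $\mathcal{K}$ only where it is available (that is, for \eqref{axiom6} and \eqref{axiom10}). Once the two collapsing rules are stated up front, all the required identities fall out mechanically.
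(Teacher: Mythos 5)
Your proposal is correct and follows essentially the same route as the paper: a direct verification of each axiom using associativity of $\cdot$, distributivity over the heap bracket, the fact that $\mathcal{K}$ is a heap morphism, and the two averaging identities (plus multiplicativity of $\mathcal{K}$ exactly for \eqref{axiom6} and \eqref{axiom10}). In fact your write-up is more complete than the paper's, which only displays three of the identities and leaves the rest implicit; your bookkeeping of which hypothesis licenses which rewriting step is accurate throughout.
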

\begin{proof}
    Since $\mathcal K$ is a heap homomorphism, the the products $\vdash,\dashv$ are distributive with respect to the heap bracket. Let $x,y,z\in A$, by the associativity of "$\cdot$", we get
    \begin{align*}
        (x\dashv y)\vdash z &= \mathcal K(x\cdot \mathcal K(y))\cdot z\\&= (\mathcal K(x)\cdot \mathcal K(y))\cdot z\\&= \mathcal K(x)\cdot (\mathcal K(y)\cdot z)
        \\&=
        x\vdash (y\vdash z).
    \end{align*}
    \begin{align*}
        (x \dashv y) \dashv  z & = (x \cdot \mathcal K( y) )\cdot \mathcal K  (z)\\& = x \cdot (\mathcal K( y) \cdot \mathcal K  (z))\\& = x \cdot \mathcal K( y \cdot z)\\ 
        &=x \dashv (y \,\bot\, z).
    \end{align*}
    \begin{align*}
        (x \,\bot\, y) \dashv  z & =(x \,\cdot\, y) \cdot \mathcal K( z)\\& =x \,\cdot\,( y \cdot \mathcal K( z))\\
        &=
        x \,\bot\, (y \dashv z).
    \end{align*}
\end{proof}


\end{document}